\NeedsTeXFormat{LaTeX2e}
[1994/12/01]

\documentclass[draft]{amsart}

\usepackage{amsmath, amsfonts, amssymb}
\usepackage{amsthm,amscd,latexsym,mathrsfs,hyperref}
\usepackage{bm}

\usepackage{color}

\pagestyle{plain}
\theoremstyle{plain} 
\newtheorem{theorem}{Theorem}[section]
\newtheorem{corollary}[theorem]{Corollary}
\newtheorem{lemma}[theorem]{Lemma}
\newtheorem{proposition}[theorem]{Proposition}

\theoremstyle{definition}
\newtheorem{definition}{Definition}[section]
\newtheorem{remark}{Remark}[section]


\DeclareMathOperator{\sat}{\mathrm{sat}}
\DeclareMathOperator{\hypo}{\mathrm{hypo}}

\DeclareMathOperator{\Lat}{Lat}

\def\X{{\mathcal X}}

\begin{document}

\title{A note on real operator monotone functions}

\author{Marcell Ga\'al and Mikl\'os P\'alfia}

\address{Marcell Ga\'al
\newline  \indent R\'enyi Institute of Mathematics \newline \indent Hungarian Academy of Sciences,
\newline \indent Budapest, Re\'altanoda utca 13-15,  1053 Hungary}
\email{marcell.gaal.91@gmail.com}

\address{
Mikl\'os P\'alfia
\newline  \indent Department of Mathematics, Sungkyunkwan University, 
\newline \indent Suwon 440-746, Korea}
\email{palfia.miklos@aut.bme.hu} 

\address{and
\newline \indent Bolyai Institute, Interdisciplinary Excellence Centre \newline \indent University of Szeged \newline  \indent  Szeged, Aradi v\'ertan\'uk tere 1, 6720 HUNGARY}

\date{\today}

\maketitle

\begin{abstract}
In this paper we initiate the study of real operator monotonicity for functions of tuples of operators, which are multivariate structured maps with a functional calculus called free functions that preserve the order between real parts (or Hermitian parts) of bounded linear Hilbert space operators. We completely characterize such functions on open convex free domains in terms of ordinary operator monotone free functions on self-adjoint domains. Further assuming the more stringent free holomorphicity, we prove that all such functions are affine linear with completely positive nonconstant part. This problem has been proposed by David Blecher at the biannual OTOA conference held in Bangalore in December 2016.
\end{abstract}

\maketitle

\section{Introduction}
In the seminal papers of Blecher et al. \cite{blecherRead2,blecherRead,blecherRuan}, research on operators $X\in\mathcal{B}(\mathcal{H})$ with positive Hermitian parts
\begin{equation*}
\Re X=\frac{1}{2}(X+X^*)\geq 0
\end{equation*}
on a Hilbert space $\mathcal{H}$, which are called \emph{real positive operators}, has been initiated to study general operator algebras. 
Such operators are also called \emph{accretive} and play an essential role in strongly continuous operator semigroups, see for instance in \cite{yosida}. Among others, further studies related to real positivity can be found in \cite{bearden,blecher,drury}. In particular, motivated in part by the paper of Kubo-Ando \cite{kubo} characterizing two-variable operator means of positive bounded linear operators on a Hilbert space, Blecher and Wang in \cite{blecher} studied root functions and the extension of the Pusz-Woronowitz geometric mean \cite{pusz}, which is given by the formula
\begin{equation*}
A\#B=\max
\left\{ X\geq 0 ~:~ \begin{bmatrix}
A & X \\
X & B
\end{bmatrix}\right\},
\end{equation*}
to the real positive setting. It is fundamental that the mean $\#$ preserves the positive definite order induced by the positive cone of bounded linear operators, and it also satisfies the arithmetic-geometric-harmonic mean inequalities \cite{bhatia,bhatiaholbrook}. It has been pointed out in \cite{blecher} that even though the usual formula
\begin{equation*}
A\#B=A^{1/2}\left(A^{-1/2}BA^{-1/2}\right)^{1/2}A^{1/2}
\end{equation*}
makes sense for two real positive operators $A,B$, it does not preserve the real positive definite order and the corresponding arithmetic-geometric-harmonic mean inequalities also fail badly.

Since, it has been somewhat surprising that no nontrivial nice real positive order preserving functions were known, even though the purely self-adjoint counterpart, the theory of (free) operator monotone functions with respect to the positive definite order is well understood in the single variable case by the classical theory of Loewner \cite{lowner,hansen3}, and now also in the non-commutative multivariable case \cite{agler,palfia2,pascoe,pascoe2} that exists in the realm of free function theory \cite{verbovetskyi}. The theory of operator means has also been extended to cover a large class of functions of probability measures on positive operators endowed with the stochastic order \cite{palfia1}. 

It is obvious that the affine (arithmetic mean like) function
\begin{equation*}
F(X_1,X_2)=cI+aX_1+bX_2
\end{equation*}
for scalars $a,b\geq 0$ and $c\in\mathbb{C}$ preserves the real positive order, and apparently so do its multivariate analogues. 
We prove that essentially no other locally bounded similarity invariant free function preserves the real positive order on open domains. As a precursor to this, we show that even if we consider possibly non-continuous free functions $F(X_1,\ldots,X_k)$ that are invariant just under unitary conjugations, then $F$ is real operator monotone if and only if the real part $\Re F(X_1,\ldots,X_k)$ of such functions is an operator monotone function of the real part $(\Re X_1,\ldots,\Re X_k)$ of the variables $(X_1,\ldots,X_k)$ such that $\Re F$ is independent of the skew-Hermitian part (imaginary part) of $(X_1,\ldots,X_k)$. These results show that real operator monotonicity is a rather strong property, especially rigid in the class of holomorphic functions. We further demonstrate, how our analysis generalizes to the more general case of free functions with a domain that is a free open subset of $\mathcal{B}(\mathcal{H})\otimes\mathcal{Z}$ for an arbitrary operator space $\mathcal{Z}$, not just $\mathcal{Z}=\mathbb{C}^k$ which corresponds to the set of $k$-tuples of operators above.

The paper is organized as follows.
In the second section we briefly review some necessary background material on free sets and free function theory, and also on real positivity along with basic characterizations of real monotonicity.
Then we establish intimate connections between real monotonicity and concavity with respect to the real positive definite order in Sections 3-4.
Finally, Section 5 deals with the complete characterization of real operator monotone free functions.

\section{Real monotonicity}
A bounded linear operator $X\in\mathcal{B}(E)$ is \emph{real positive} (denoted by $X\geq_{\mathrm{Re}} 0$) whenever the real part of $X$ is positive semi-definite, that is
\begin{equation*}
\Re(X):=\frac{X+X^*}{2}\geq 0
\end{equation*}
on the Hilbert space $E$. The symbol $\mathbb{P}_{\mathrm{Re}}(E)$ stands for the cone of real positive definite operators over the Hilbert space $E$ such that their real parts are invertible.
For $A,B\in\mathcal{B}(E)$ we write $A\leq_{\mathrm{Re}} B$ if and only if $A-B\geq_{\mathrm{Re}} 0$. The \emph{real positive order} for $k$-tuples $A,B\in\mathcal{B}(E)^k$ is defined similarly as $A\leq_{\mathrm{Re}} B$ exactly when $A_i-B_i\geq_{\mathrm{Re}} 0$ for all $i\in \mathbb{N}_k$. Note however that $\leq_{\mathrm{Re}}$ is just a preorder, it is not a partial order because the conditions $A\leq_{\mathrm{Re}}B$ and $B\leq_{\mathrm{Re}} A$ do not imply that $A=B$. They just imply $\Re A=\Re B$.

\begin{definition}[Free set and matrix convex set]\label{def:matrix_covexity}
A collection $(D(E))$ of sets of operators $D(E)\subseteq\mathcal{B}(E)^k$ for each Hilbert space $E$ is a called a \emph{free set} whenever for all Hilbert space $E,K$ we have the following:
\begin{itemize}
	\item[1)] $U^*D(E)U\subseteq D(K)$ for all unitary $U:E\mapsto K$.
	\item[2)] $D(E)\oplus D(K)\subseteq D(E\oplus K)$
\end{itemize}
where $U^*XU:=(U^{*}X_1U,\ldots,U^{*}X_kU)$ for $X\in\mathcal{B}(E)^k$.

If additionally (2) holds for any linear isometry $U:K\mapsto E$, then $(D(E))$ is a \emph{matrix convex set}.
\end{definition}

We remark that if a given free set $(D(E))$ is matrix convex, then according to \cite{helton4} each $D(E)$ is convex in the usual sense.

\begin{definition}[Free function]\label{D:freeFunction}
Let $\mathcal{L}$ be a fixed Hilbert space. A multivariate function $F:D(E)\mapsto \mathcal{B}(\mathcal{L}\otimes E)$ for a domain $D(E)\subseteq\mathcal{B}(E)^k$ defined for all Hilbert spaces $E,K$ is called a \textit{free function} whenever for all $A \in \mathcal{B}(E)^k$ and $B\in \mathcal{B}(K)^k$ in the domain of $F$, we have
\begin{itemize}
	\item[1)] unitary invariance, that is
	$$F(U^{*}A_1U,\ldots,U^{*}A_kU)=(I_\mathcal{L}\otimes U^{*})F(A_1,\ldots,A_k)(I_\mathcal{L}\otimes U)$$ 
	holds for all unitaries $U\in \mathcal{B}(E)$;
	\item[2)] direct sum invariance, that is
	$$F\left(A_1 \oplus B_1,\ldots,A_k \oplus B_k\right)=F(A_1,\ldots,A_k) \oplus F(B_1,\ldots,B_k).$$
\end{itemize}
\end{definition}
Notice that the above extends naturally the notion of a free function given as a graded map between self-adjoint sets \cite{palfia2,pascoe,pascoe2}.

\begin{definition}[Real monotonicity and concavity]\label{D:realmon}
\end{definition}
\begin{itemize}
    \item[1)] Given a free set $(B(E))$ where $B(E)\subseteq \mathcal{B}(E)^k$, a free function $F:D(E)\mapsto \mathcal{B}(\mathcal{L}\otimes E)$ is said to be \emph{real operator monotone} if we have $F(A)\leq_{\mathrm{Re}} F(B)$, whenever $A\leq_{\mathrm{Re}} B$ for $A,B\in D(E)$.
    \item[2)] If each $D(E)$ is convex, then the free function $F:D(E)\mapsto \mathcal{B}(\mathcal{L}\otimes E)$ is said to be \emph{real operator concave} if for all $A,B\in D(E)$ and $\lambda\in[0,1]$, we have
\begin{equation*}
(1-\lambda)F(A)+\lambda F(B)\leq_{\mathrm{Re}} F((1-\lambda)A+\lambda B).
\end{equation*}
    \item[3)] If one of the above two properties is satisfied only for finite dimensional $E$, then we say that the free function $F:D(\mathbb{C}^n)^k\mapsto \mathcal{B}(\mathcal{L}\otimes \mathbb{C}^n)$ is \emph{real $n$-monotone} or \emph{real $n$-concave}, accordingly.
\end{itemize}

\medskip

Let $\mathbb{S}(E):=\{X\in\mathcal{B}(E):X^*=X\}$ denote the set of self-adjoint bounded linear operators acting on a Hilbert space $E$.

\begin{remark}
If a free domain $(D(E))$ consists of only self-adjoint operators, then a real operator monotone free function $F:D(E)\mapsto \mathbb{S}(\mathcal{L}\otimes E)$ is operator monotone in the usual sense, that is, it preserves the positive definite order. For such functions a powerful structure theory is already available for matrix convex $(D(E))$ with nonempy interior in \cite{agler,palfia2,pascoe2}. They are essentially analytic functions of its entries such that they analytically continue to upper half-spaces, that is, operator entries with strictly positive imaginary parts. In \cite{palfia2} a widely applicable formula, based on the Schur complement, is also available through which the analytic extension can be obtained.
\end{remark}

\begin{proposition}\label{P:FrechetRealPos}
Let $D(E)$ be a matrix convex set with $D(E)\subseteq\mathcal{B}(E)^k$ and let $F:D(E)\mapsto \mathcal{B}(\mathcal{L}\otimes E)$ be a free function such that the Frech\'et-derivative $DF(X)[H]$ exists for any $H\in\mathcal{B}(E)^k$ and $X\in D(E)$. Then $F$ is real operator monotone if and only if $DF(X)[\cdot]$ is a real completely positive linear map, that is, we have
\begin{equation}\label{eq:P:FrechetRealPos}
DF(X)[H]\geq_{\mathrm{Re}}0
\end{equation}
for $H\geq_{\mathrm{Re}}0$.
\end{proposition}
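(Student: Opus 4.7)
My plan is to handle both implications by reducing the operator condition $\geq_{\mathrm{Re}} 0$ to scalar inequalities through testing against vectors $\xi\in\mathcal{L}\otimes E$, exploiting the convexity of $D(E)$ guaranteed by matrix convexity.

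For the direction that real monotonicity of $F$ implies real positivity of $DF(X)[\cdot]$, fix $X\in D(E)$ and a direction $H \geq_{\mathrm{Re}} 0$. Since the Fréchet derivative at $X$ is assumed to exist, the point $X+tH$ lies in $D(E)$ for all sufficiently small $t>0$. Because $tH\geq_{\mathrm{Re}}0$, monotonicity yields $F(X+tH)-F(X)\geq_{\mathrm{Re}}0$. Dividing by $t$ and letting $t\to 0^+$, norm-closedness of the real positive cone $\{Y\in\mathcal{B}(\mathcal{L}\otimes E): \Re Y\geq 0\}$ transfers the inequality to the limit $DF(X)[H]\geq_{\mathrm{Re}}0$.

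For the converse, given $A\leq_{\mathrm{Re}}B$ in $D(E)$, the segment $X(t):=(1-t)A+tB$ lies in $D(E)$ by convexity (matrix convex sets are ordinarily convex, as noted after Definition~\ref{def:matrix_covexity}). For each $\xi\in\mathcal{L}\otimes E$, set $\phi_\xi(t):=\Re\langle\xi,F(X(t))\xi\rangle$. By the chain rule applied to the direction $B-A\geq_{\mathrm{Re}}0$ and the standing hypothesis,
\begin{equation*}
\phi_\xi'(t)=\langle\xi,\Re(DF(X(t))[B-A])\xi\rangle\geq 0.
\end{equation*}
Hence $\phi_\xi$ is non-decreasing on $[0,1]$, so $\phi_\xi(0)\leq\phi_\xi(1)$. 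Since $\xi$ was arbitrary, $\Re F(A)\leq\Re F(B)$, i.e., $F(A)\leq_{\mathrm{Re}}F(B)$.

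The main technical obstacle I anticipate is justifying that $X+tH\in D(E)$ for small $t>0$ in the forward direction. This rests on the implicit assumption that $X$ lies in the interior of $D(E)$, so that $DF(X)$ is a genuine Fréchet derivative; an alternative is to invoke matrix convexity together with a local perturbation argument. Once this domain issue is disposed of, the proof needs only norm-closedness of the real positive cone and the elementary one-dimensional fact $\phi'\geq 0\Rightarrow \phi$ non-decreasing, so no deeper machinery is required.
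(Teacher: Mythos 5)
Your proof is correct and takes essentially the same route as the paper's: the forward direction uses the one-sided difference quotient together with norm-closedness of the real positive cone, and the converse differentiates along the segment $(1-t)A+tB$ (you scalarize with vectors $\xi$ where the paper instead integrates the operator-valued derivative via $\int_0^1 DF(A(t))[A'(t)]\,dt=F(B)-F(A)$; these are equivalent). The only piece the paper includes that you omit is the observation that, since $DF(X)[H]$ is itself a free function of $(X,H)$, the amplification formula upgrades positivity to \emph{complete} positivity---but as the statement defines ``real completely positive'' by the displayed condition \eqref{eq:P:FrechetRealPos}, this omission does not affect the correctness of your argument.
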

\begin{proof}
$"\Rightarrow":$ Let $X\in D(E)$ and $0\leq_{\mathrm{Re}}H\in\mathcal{B}(E)^k$. Then $X+tH\geq_{\mathrm{Re}} X$ for any $t>0$, hence $F(X+tH)\geq_{\mathrm{Re}}F(X)$. This implies that
\begin{equation*}
DF(X)[H]=\lim_{t\to 0+}\frac{F(X+tH)-F(X)}{t}\geq_{\mathrm{Re}} 0.
\end{equation*}
Further $DF(X)[H]$ is also a free function of its variables $(X,H)$. Thus, the linear map $H\mapsto DF(\cdot)[H]$ satisfies the amplification formula
 of completely bounded linear maps (for a proof, see Proposition 2.10. \cite{pascoe}), that is
\begin{equation*}
DF(X\otimes I)[H\otimes V]=DF(X)(H)\otimes V
\end{equation*}
for any $V\in\mathbb{S}(K)$, thus $DF(X)[\cdot]$ is also completely positive.

$"\Leftarrow":$ Let $A\leq_{\mathrm{Re}}B\in D(E)$ and $A(t):=(1-t)A+tB$ for $t\in[0,1]$. Then $A'(t)=B-A\geq_{\mathrm{Re}} 0$ and it follows that $DF(A(t))[A'(t)]\geq_{\mathrm{Re}} 0$ by the assumption. Since
\begin{equation*}
\int_{0}^{1}DF(A(t))[A'(t)]dt=F(B)-F(A),
\end{equation*}
we get that $F(B)\geq_{\mathrm{Re}} F(A)$.
\end{proof}

\begin{remark}\label{R:Steinspring}
It is known that all real completely positive linear maps satisfy the same Stinespring representation formula 
 as completely positive linear maps do \cite[Theorem 2.4.]{bearden}
\end{remark}

\section{Characterizations of real operator monotone functions on $\mathbb{P}_{\mathrm{Re}}$}

In this section we turn to the investigation of general properties of real operator monotone and concave functions. 
We shall need the following technical lemma, which is a slight modification of \cite[Lemma 3.5.5.]{niculescu}
\begin{lemma}\label{L:concave-bounded}
Let $F$ be a concave function into $\mathbb{S}(E)$ on an open convex set $U$ in a normed linear space. If $F$ is bounded from below in a neighborhood of one point of $U$, then $F$ is locally bounded on $U$.
\end{lemma}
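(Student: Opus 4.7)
The plan is to adapt the classical scalar argument for real concave functions to the L\"owner order on $\mathbb{S}(E)$. The guiding observation is that any sandwich $M \leq X \leq N$ in the semi-definite order on self-adjoint operators produces a norm bound, because $0 \leq X - M \leq N - M$ forces $\|X - M\| \leq \|N - M\|$ by norm-monotonicity on the positive cone, and hence $\|X\| \leq \|N - M\| + \|M\|$. So it suffices to produce \emph{both} a semi-definite lower bound and a semi-definite upper bound on $F$ in a neighborhood of each point of $U$.

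First, I would establish a local upper bound near the base point by a midpoint reflection argument. Let $x_0 \in U$ be the point of the hypothesis, and choose a symmetric open convex neighborhood $V$ of $x_0$ (e.g.\ a norm ball) with $V \subseteq U$ on which $F \geq M$ holds for some $M \in \mathbb{S}(E)$. For $h$ with $x_0 \pm h \in V$, concavity at the midpoint gives
\[
F(x_0) \geq \tfrac{1}{2}F(x_0+h) + \tfrac{1}{2}F(x_0-h),
\]
which rearranges to $F(x_0+h) \leq 2F(x_0) - F(x_0-h) \leq 2F(x_0) - M$. Setting $N := 2F(x_0) - M$, this yields $M \leq F \leq N$ on $V$, and thus local norm-boundedness near $x_0$.

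Second, I would propagate the lower bound to an arbitrary $y \in U$ by the usual extension-across-a-segment. Using openness of $U$ and convexity, I can find $z \in U$ on the ray from $x_0$ through $y$ extended beyond $y$, so that $y = (1-t)x_0 + tz$ for some $t \in (0,1)$. The affine homeomorphism $\phi(x) = (1-t)x + tz$ then maps $V$ onto the open neighborhood $W := (1-t)V + tz$ of $y$, which is contained in $U$ by convexity. For any $w = (1-t)x + tz \in W$, concavity gives
\[
F(w) \geq (1-t)F(x) + tF(z) \geq (1-t)M + tF(z),
\]
a uniform L\"owner lower bound on $W$. Applying the midpoint reflection argument of the first step at $y$ (on a symmetric subneighborhood of $W$) then produces a matching upper bound, and hence local norm-boundedness near $y$.

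The only point that requires verification, though it is not really an obstacle, is that the scalar reflection argument goes through verbatim in $\mathbb{S}(E)$: this works because the L\"owner order is preserved under taking affine combinations with nonnegative real coefficients, and because sandwiching in that order is equivalent to an operator norm bound via the monotonicity of the norm on the positive cone. Since $y \in U$ was arbitrary, local boundedness of $F$ on all of $U$ follows.
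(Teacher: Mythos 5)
Your proof is correct and follows essentially the same route as the paper's: propagate the L\"owner lower bound from the given neighborhood to a neighborhood of an arbitrary point by stretching the segment beyond that point, then convert it into an upper bound via the midpoint reflection $F(x)\geq\tfrac12\bigl(F(v)+F(2x-v)\bigr)$. Your explicit remark that an order sandwich $M\leq F\leq N$ yields a norm bound is a useful detail the paper leaves implicit, but the argument is otherwise the same.
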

\begin{proof}
Suppose that $F$ is bounded from below by $MI$ for some $M\in\mathbb{R}$ on an open ball $B(a,r)$ with radius $r$ around $a$. Let $x\in U$ and choose $\rho>1$ such that $z:=a+\rho(x-a)\in U$. If $\lambda=1/\rho$, then $$V=\{v:v=(1-\lambda)y+\lambda z, y\in B(a,r)\}$$ 
is a neighborhood of $x=(1-\lambda)a+\lambda z$, with radius $(1-\lambda)r$. Moreover, for $v\in V$ we have
\begin{equation*}
F(v)\geq (1-\lambda)F(y)+\lambda F(z)\geq (1-\lambda)MI+\lambda F(z)\geq KI
\end{equation*}
for some $K\in\mathbb{R}$. To show that $F$ is bounded above in the same neighborhood, choose arbitrarily $v\in V$ and notice that $2x-v\in V$. By the concavity of $F$, one finds that $$F(x)\geq \frac{F(v)+F(2x-v)}{2}$$ which easily yields
\begin{equation*}
F(v)\leq 2F(x)-F(2x-v)\leq 2F(x)-KI.
\end{equation*}
\end{proof}

\begin{proposition}[see also Proposition 3.5.4 in \cite{niculescu}]\label{P:concave-cont}
A function $F:\mathbb{P}_{\mathrm{Re}}(E)^k\mapsto \mathcal{B}(E)$ with concave real part that is locally bounded from below has a continuous real part $\Re(F):\mathbb{P}_{\mathrm{Re}}(E)^k\mapsto \mathbb{S}(E)$ in the norm topology.
\end{proposition}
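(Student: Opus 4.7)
The statement is the operator-valued analogue of the classical fact that a concave function on an open convex subset of a normed space that is locally bounded from below is automatically locally Lipschitz (hence continuous). The plan is to run the classical scalar argument, replacing scalar inequalities by inequalities in the self-adjoint order, and at the end invoke the standard fact that for $A=A^*$ the sandwich $-CI\leq A\leq CI$ is equivalent to $\|A\|\leq C$.

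Set $G:=\Re(F)$. The domain $\mathbb{P}_{\mathrm{Re}}(E)^k$ is an open convex subset of the normed space $\mathcal{B}(E)^k$, and by hypothesis $G$ is concave with values in $\mathbb{S}(E)$ and locally bounded from below. Lemma~\ref{L:concave-bounded} therefore promotes this to local two-sided boundedness: for each $x_0\in\mathbb{P}_{\mathrm{Re}}(E)^k$ there exist $r>0$ and $M\geq 0$ with $-MI\leq G(v)\leq MI$ for every $v\in B(x_0,r)\subseteq\mathbb{P}_{\mathrm{Re}}(E)^k$. It is this bound from above, not given for free by the hypothesis, that makes the subsequent estimate work.

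Fix such a ball around $x_0$. For $x\in B(x_0,r/2)$ with $x\neq x_0$, introduce the auxiliary points
\[
y := x_0 + \tfrac{r}{2\|x-x_0\|}(x-x_0),\qquad z := 2x_0 - y,
\]
both lying on the sphere of radius $r/2$ around $x_0$, hence in $B(x_0,r)$. A direct computation with $\lambda := 2\|x-x_0\|/r\in[0,1)$ gives the two convex-combination identities
\[
x = (1-\lambda)x_0 + \lambda y,\qquad x_0 = \tfrac{1}{1+\lambda}x + \tfrac{\lambda}{1+\lambda}z.
\]
Applying the concavity of $G$ to each and using $-MI\leq G(y),G(z),G(x_0)\leq MI$, after rearrangement one obtains the order sandwich
\[
-2M\lambda\, I \;\leq\; G(x)-G(x_0) \;\leq\; 2M\lambda\, I.
\]
Since $G(x)-G(x_0)\in\mathbb{S}(E)$, this translates into the norm estimate
\[
\|G(x)-G(x_0)\| \;\leq\; 2M\lambda \;=\; \tfrac{4M}{r}\,\|x-x_0\|,
\]
which is local Lipschitz continuity of $G=\Re(F)$ at the arbitrary point $x_0$, and in particular norm continuity on $\mathbb{P}_{\mathrm{Re}}(E)^k$.

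The argument is essentially mechanical; the only nontrivial input is Lemma~\ref{L:concave-bounded}, because the one-sided hypothesis controls $G$ from below but not from above, and both directions of the sandwich require an upper bound on $G$ at the reflected point $z$. Once that upper bound is in hand, the scalar-valued proof translates verbatim via the fact that the operator intervals $[-CI,CI]$ in $\mathbb{S}(E)$ are precisely the norm balls of radius $C$.
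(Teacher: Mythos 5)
Your proposal is correct and follows essentially the same route as the paper: invoke Lemma~\ref{L:concave-bounded} to upgrade the one-sided bound to a two-sided bound $-MI\leq \Re(F)\leq MI$ on a ball, then run the classical convex-combination argument (extending the segment to a point on a larger sphere and to its reflection) to squeeze $\Re(F)(x)-\Re(F)(x_0)$ between $\pm 2M\lambda I$ and convert the operator sandwich into a norm (Lipschitz) estimate. The only cosmetic difference is that you estimate against the fixed center $x_0$ while the paper estimates between two arbitrary points $X,Y$ of the smaller ball, but both yield local Lipschitz continuity and the verifications check out.
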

\begin{proof}
Let $U\subseteq\mathbb{P}_{\mathrm{Re}}(E)^k$ be an open norm bounded neighborhood with respect to the operator norm $\|\cdot\|$. Let $A\in U$ and $r>0$ such that the open ball $$B(A,2r)=\{X\in U:\|X-A\|<2r\}\subseteq U.$$ 
Let $X,Y\in B(A,r)$ and $X\neq Y$ such that $\alpha:=\|Y-X\|<r$. Define
\begin{equation}\label{eq:P:concave-cont-1}
Z:=Y+\frac{r}{\alpha}(Y-X).
\end{equation}
Then
\begin{equation*}
\|Z-A\|\leq\|Y-A\|+\frac{r}{\alpha}\|Y-X\|<2r,
\end{equation*}
that is, $Z\in B(A,2r)$. By \eqref{eq:P:concave-cont-1} we have
\begin{equation*}
Y=\frac{r}{r+\alpha}X+\frac{\alpha}{r+\alpha}Z,
\end{equation*}
so by the real operator concavity of $F$ we get
\begin{equation*}
\Re(F)(Y)\geq\frac{r}{r+\alpha}\Re(F)(X)+\frac{\alpha}{r+\alpha}\Re(F)(Z),
\end{equation*}
which after rearranging yields
\begin{equation*}
\begin{split}
\Re(F)(X)-\Re(F)(Y)&\leq\frac{\alpha}{r+\alpha}(\Re(F)(X)-\Re(F)(Z))\\
&\leq\frac{\alpha}{r+\alpha}2MI\leq\frac{\alpha}{r}2MI,
\end{split}
\end{equation*}
where the real number $M>0$ provides a local bound for $\Re(F)$ on $U$ in the form of 
$$-2MI\leq \Re(F)(X)-\Re(F)(Z)\leq 2MI$$ 
in view of Lemma~\ref{L:concave-bounded}. Now exchange the role of $X$ and $Y$ in the above to obtain the reverse inequality
\begin{equation*}
\Re(F)(Y)-\Re(F)(X)\leq\frac{\alpha}{r}2MI.
\end{equation*}
From the above pair of inequalities we get
\begin{equation*}
\|\Re(F)(Y)-\Re(F)(X)\|\leq2\frac{M}{r}\|Y-X\|
\end{equation*}
proving the continuity.
\end{proof}

A net of operators $\{A_i\}_{i\in \mathcal{I}}$ is called increasing if $A_i\geq A_j$ for $i\geq j$ and $i,j\in\mathcal{I}$. Also $\{A_i\}_{i\in \mathcal{I}}$ is bounded from above if there exists some real constant $K>0$ such that $A_i\leq KI$ for all $i\in\mathcal{I}$. It is well known that any bounded from above increasing net of operators $\{A_i\}_{i\in \mathcal{I}}$ has a least upper bound $\sup_{i\in\mathcal{I}}A_i$ such that $B_j:=A_j-\sup_{i\in\mathcal{I}}A_i$ converges to $0$ in the strong operator topology. Similarly if we have a decreasing net of bounded operators that is bounded from below, then the net converges to its greatest lower bound.

The next characterization result is an extension of Theorem 2.1 in \cite{hansen3} to several variables and to the case of the real positive order. The proof is analogous to that of Theorem 2.1. We consider the finite dimensional situation, however, the proof is presented in such a way that it works also in the infinite dimensional setting as well.
\begin{proposition}\label{P:contmonotone}
Let $F:\mathbb{P}_{\mathrm{Re}}(\mathbb{C}^{2n})^k\mapsto \mathcal{B}(\mathbb{C}^{2n})$ be a real $2n$-monotone function. Then its restriction $F:\mathbb{P}_{\mathrm{Re}}(\mathbb{C}^{n})^k\mapsto \mathcal{B}(\mathbb{C}^{n})$ is real $n$-concave. Moreover, the real part $\Re(F):\mathbb{P}_{\mathrm{Re}}(\mathbb{C}^{n})^k\mapsto \mathbb{S}(\mathbb{C}^{n})$ is norm-continuous.
\end{proposition}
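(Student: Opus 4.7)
The plan is to adapt Hansen's dimension-doubling trick (analogous to Theorem~2.1 of \cite{hansen3}) to the real positive setting and then invoke Proposition~\ref{P:concave-cont} for the norm-continuity. Fix $A,B\in \mathbb{P}_{\mathrm{Re}}(\mathbb{C}^n)^k$ and $t\in[0,1]$. The self-adjoint unitary
\[
U:=\begin{pmatrix} \sqrt{t}\,I_n & \sqrt{1-t}\,I_n \\ \sqrt{1-t}\,I_n & -\sqrt{t}\,I_n \end{pmatrix}\in\mathcal{B}(\mathbb{C}^{2n})
\]
conjugates $A\oplus B$ into $\Phi:=U(A\oplus B)U\in \mathbb{P}_{\mathrm{Re}}(\mathbb{C}^{2n})^k$ whose $(1,1)$-block is $tA+(1-t)B$. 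Unitary invariance and direct-sum invariance of $F$ give $F(\Phi)=U\bigl(F(A)\oplus F(B)\bigr)U$, so the $(1,1)$-block of $\Re F(\Phi)$ equals $t\,\Re F(A)+(1-t)\,\Re F(B)$.

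Second, for every $\epsilon>0$ and every coordinate $i\in\{1,\dots,k\}$, choose a self-adjoint $W_{\epsilon,i}$ with
\[
\Re W_{\epsilon,i}\;\ge\;(1-t)\Re A_i+t\Re B_i+\tfrac{t(1-t)}{\epsilon}(\Re A_i-\Re B_i)^2.
\]
The Schur-complement criterion for block-operator positivity then guarantees
\[
\Psi_\epsilon:=\bigl(tA+(1-t)B+\epsilon I\bigr)\oplus W_\epsilon\;\ge_{\mathrm{Re}}\;\Phi
\]
in $\mathbb{P}_{\mathrm{Re}}(\mathbb{C}^{2n})^k$. Real $2n$-monotonicity yields $F(\Psi_\epsilon)\ge_{\mathrm{Re}} F(\Phi)$; by direct-sum invariance the top-left block of $F(\Psi_\epsilon)$ is $F(tA+(1-t)B+\epsilon I)$, and comparing real parts on the $(1,1)$-corner gives the approximate concavity
\[
\Re F\bigl(tA+(1-t)B+\epsilon I\bigr)\;\ge\;t\,\Re F(A)+(1-t)\,\Re F(B),\qquad \epsilon>0. \quad (\star)
\]

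Third, $2n$-monotonicity passes to $n$-monotonicity (by direct-summing with a fixed element and compressing), which in turn yields local norm-boundedness of $\Re F$: for $X$ near $X_0\in\mathbb{P}_{\mathrm{Re}}(\mathbb{C}^n)^k$ one has $X_0-cI\le_{\mathrm{Re}} X\le_{\mathrm{Re}} X_0+cI$ for a controlled $c>0$, so $\Re F(X)$ is squeezed between $\Re F(X_0\pm cI)$. Taking $\epsilon\to 0^+$ in $(\star)$ then produces the desired real $n$-concavity of $F$, after which Proposition~\ref{P:concave-cont} upgrades concavity plus local boundedness to norm-continuity of $\Re F$. The main obstacle is precisely this $\epsilon$-limit, since $\Re F$ is not a priori continuous: monotonicity renders the left-hand side of $(\star)$ a norm-bounded decreasing net with some strong limit $L\ge \Re F(tA+(1-t)B)$, but one still owes the reverse inequality $L\le \Re F(tA+(1-t)B)$. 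A clean way to secure it is to run $(\star)$ symmetrically with $A-\delta I,B-\delta I$ in place of $A,B$, producing the companion bound $\Re F(tA+(1-t)B)\ge t\Re F(A-\delta I)+(1-t)\Re F(B-\delta I)$; combined with local boundedness and monotonicity this sandwiches the midpoint value between the two monotone limits and yields the concavity inequality in the limit.
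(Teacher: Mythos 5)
Your construction of the doubling unitary, the Schur-complement domination, and the resulting approximate concavity $(\star)$ coincide with the paper's argument, and you correctly single out the passage $\epsilon\to 0^+$ as the crux. However, the fix you propose for that step does not close the gap. The companion bound
\[
\Re F(tA+(1-t)B)\;\ge\; t\,\Re F(A-\delta I)+(1-t)\,\Re F(B-\delta I)
\]
only bounds the midpoint value from below by quantities that are themselves $\le t\,\Re F(A)+(1-t)\,\Re F(B)$ (by real monotonicity); letting $\delta\to 0^+$ its right-hand side increases to $t\,G^-(A)+(1-t)\,G^-(B)$, where $G^-(X):=\sup_{\delta>0}\Re F(X-\delta I)\le \Re F(X)$. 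Likewise $(\star)$ only yields $G^+(Y)\ge t\,\Re F(A)+(1-t)\,\Re F(B)$ for $Y=tA+(1-t)B$, where $G^+(Y):=\inf_{\epsilon>0}\Re F(Y+\epsilon I)\ge \Re F(Y)$. Both $\Re F(Y)$ and the target $t\,\Re F(A)+(1-t)\,\Re F(B)$ lie between these two monotone limits, so the ``sandwich'' does not order them: if $\Re F$ had a jump along the direction $I$ at $A$, $B$ or $Y$, your argument would give nothing, and ruling out such jumps is precisely what remains to be proved.

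The paper closes this gap differently. It introduces the regularization $F^{+}$ with $\Re F^{+}=G^{+}$; applying $(\star)$ at $A+\epsilon' I$, $B+\epsilon' I$ and passing to the strong limit shows that $F^{+}$ is \emph{genuinely} real $n$-concave (not merely approximately so), and it is locally bounded below by monotonicity, so Proposition~\ref{P:concave-cont} makes $\Re F^{+}$ norm-continuous. The two-sided estimate $F^{+}(X-\epsilon I)\le_{\mathrm{Re}} F(X)\le_{\mathrm{Re}} F^{+}(X)$ combined with this norm-continuity forces $\Re F=\Re F^{+}$, from which the concavity and continuity of $\Re F$ follow. You should replace your final limiting paragraph with this regularization step; everything preceding it is sound and matches the paper's proof.
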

\begin{proof}
Let $A,B\in\mathbb{P}_{\mathrm{Re}}(\mathbb{C}^{n})^k$ and let $\lambda\in[0,1]$. Then the $2n$-by-$2n$ block matrix 
\begin{equation*}
V:=\left[ \begin{array}{cc}
\lambda^{1/2}I_n & -(1-\lambda)^{1/2}I_n \\
(1-\lambda)^{1/2}I_n & \lambda^{1/2}I_n \end{array} \right]
\end{equation*}
is unitary.
Elementary calculation reveals that
\begin{equation*}
V^*\left[ \begin{array}{cc}
A & 0 \\
0 & B \end{array} \right]V=
\left[ \begin{array}{cc}
\lambda A+(1-\lambda)B & \lambda^{1/2}(1-\lambda)^{1/2}(B-A) \\
\lambda^{1/2}(1-\lambda)^{1/2}(B-A) & (1-\lambda)A+\lambda B \end{array} \right].
\end{equation*}
Set $D:=-\lambda^{1/2}(1-\lambda)^{1/2}(\Re(B)-\Re(A))$ and notice that for any given $\epsilon>0$
\begin{equation*}
\left[ \begin{array}{cc}
\lambda A+(1-\lambda)B+\epsilon I & 0 \\
0 & 2Z \end{array} \right]-V^*\left[ \begin{array}{cc}
A & 0 \\
0 & B \end{array} \right]V\geq_{\mathrm{Re}}
\left[ \begin{array}{cc}
\epsilon I & D \\
D & Z \end{array} \right]
\end{equation*}
if $Z \geq (1-\lambda)\Re(A)+\lambda \Re(B)$. The last $k$-tuple of block matrices is positive semi-definite if $Z \geq D^2/\epsilon$. So for sufficiently large positive definite $Z$ we have
\begin{equation*}
V^*\left[ \begin{array}{cc}
A & 0 \\
0 & B \end{array} \right]V\leq_{\mathrm{Re}} \left[ \begin{array}{cc}
\lambda A+(1-\lambda)B+\epsilon I & 0 \\
0 & 2Z \end{array} \right].
\end{equation*}
For such $Z>0$, by the $2n$-monotonicity of $F$ we get
\begin{equation*}
F\left(V^*\left[ \begin{array}{cc}
A & 0 \\
0 & B \end{array} \right]V\right)\leq_{\mathrm{Re}} \left[ \begin{array}{cc}
F(\lambda A+(1-\lambda)B+\epsilon I) & 0 \\
0 & F(2Z) \end{array} \right].
\end{equation*}
We also have that
\begin{equation*}
\begin{split}
&F\left(V^*\left[ \begin{array}{cc}
{A} & 0 \\
0 & {B} \end{array} \right]V\right)=
V^*\left[ \begin{array}{cc}
F({A}) & 0 \\
0 & F({B}) \end{array} \right]V\\
&=\left[ \begin{array}{cc}
\lambda F({A})+(1-\lambda)F({B}) & \lambda^{1/2}(1-\lambda)^{1/2}(F({B})-F({A})) \\
\lambda^{1/2}(1-\lambda)^{1/2}(F({B})-F({A})) & (1-\lambda)F({A})+\lambda F({B}) \end{array} \right],
\end{split}
\end{equation*}
hence we obtain that
\begin{equation}\label{eq:P:concave}
\lambda F({A})+(1-\lambda)F({B})\leq_{\mathrm{Re}} F(\lambda{A}+(1-\lambda){B}+\epsilon I).
\end{equation}

Now since $F$ is real $2n$-monotone, $\Re(F)(X+\epsilon{I})$ for $\epsilon>0$ forms a decreasing net of operators bounded from below by $\Re(F)(X)$, thus the right strong limit
$$\Re(F^{+})({X}):=\inf_{\epsilon>0}\Re(F)(X+\epsilon{I})=\lim_{\epsilon\to 0+}\Re(F)(X+\epsilon{I})$$
exists for all ${X}\in\mathbb{P}_{\mathrm{Re}}(\mathbb{C}^{2n})^k$ defining the real part of $F^{+}$. The imaginary part is defined as $\Im(F^{+})(X):=\Im(F)(X)$. Hence for any $\epsilon>0$, using \eqref{eq:P:concave}, we obtain
\[
\begin{gathered}
\lambda F^{+}({A})+(1-\lambda)F^{+}({B})\leq_{\mathrm{Re}} \lambda F({A}+\epsilon{I})+(1-\lambda)F({B}+\epsilon{I}) \\
\leq_{\mathrm{Re}} F(\lambda{A}+(1-\lambda){B}+2\epsilon{I}).
\end{gathered}
\]
Taking the limit $\epsilon\to 0+$ in the strong operator topology we conclude that
\begin{equation*}
\lambda F^{+}({A})+(1-\lambda)F^{+}({B})\leq_{\mathrm{Re}} F^{+}(\lambda{A}+(1-\lambda){B})
\end{equation*}
meaning that the free function $F^{+}$ is real $n$-concave. Also
\begin{equation*}
F({X})\leq_{\mathrm{Re}} F^{+}({X})\leq_{\mathrm{Re}} F({X}+\epsilon{I})
\end{equation*}
for all $\epsilon>0$, since $\Re(F)$ is monotone increasing. Thus, $\Re(F^{+})$ is bounded from below on order bounded sets, whence by Proposition~\ref{P:concave-cont} $\Re(F^{+})$ is norm continuous on order bounded sets because every point $A\in\mathbb{S}$ has a basis of neighborhoods in the norm topology that are order bounded sets.

As the last step, again by the real monotonicity of $F$ we have
\begin{equation*}
F^{+}({X}-\epsilon{I})\leq_{\mathrm{Re}} F({X})\leq_{\mathrm{Re}} F^{+}({X}),
\end{equation*}
and since $\Re(F^{+})$ is norm-continuous we get $F=F^{+}$ by taking the norm-limit $\epsilon\to 0+$. Hence we can also take the norm-limit $\epsilon\to 0+$ in \eqref{eq:P:concave} and conclude that $F$ is real $n$-concave and $\Re(F)$ is continuous in the norm topology.
\end{proof}

\begin{corollary}\label{P:concavemonotone}
A real operator monotone function $F:\mathbb{P}_{\mathrm{Re}}(E)^k\mapsto \mathcal{B}(E)$ is real operator concave, and it has a norm-continuous real part $\Re(F)$.
\end{corollary}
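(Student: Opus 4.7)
The plan is to observe that the proof of Proposition~\ref{P:contmonotone} goes through verbatim with the finite-dimensional spaces $\mathbb{C}^n$ and $\mathbb{C}^{2n}$ replaced by the arbitrary Hilbert space $E$ and its square $E\oplus E$. Since $F$ is a real operator monotone free function, it is in particular real monotone on $\mathbb{P}_{\mathrm{Re}}(E\oplus E)^k$, which is precisely what the construction in the proof of Proposition~\ref{P:contmonotone} requires.

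First, I would form the same block $2\times 2$ unitary $V$ on $E\oplus E$, invoke the direct-sum and unitary-invariance axioms of the free function $F$, and apply real operator monotonicity on $E\oplus E$ to the comparison between $V^*(A\oplus B)V$ and the appropriate diagonal block matrix. This yields, for $A,B\in\mathbb{P}_{\mathrm{Re}}(E)^k$, $\lambda\in[0,1]$ and any $\epsilon>0$, the $\epsilon$-perturbed concavity inequality
\begin{equation*}
\lambda F(A)+(1-\lambda)F(B)\leq_{\mathrm{Re}} F(\lambda A+(1-\lambda)B+\epsilon I),
\end{equation*}
exactly as in \eqref{eq:P:concave}.

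Next, I would pass to the strong limit $\Re(F^{+})(X):=\slim_{\epsilon\to 0+}\Re(F)(X+\epsilon I)$, which exists because the net $\{\Re(F)(X+\epsilon I)\}_{\epsilon>0}$ is decreasing and bounded from below by $\Re(F)(X)$, and set $\Im(F^{+})(X):=\Im(F)(X)$. Strong-limiting the above inequality makes $F^{+}$ real operator concave. Since $\Re(F^{+})$ is then locally bounded from below, Proposition~\ref{P:concave-cont} immediately yields norm-continuity of $\Re(F^{+})$. The sandwich
\begin{equation*}
F^{+}(X-\epsilon I)\leq_{\mathrm{Re}} F(X)\leq_{\mathrm{Re}} F^{+}(X),
\end{equation*}
combined with this norm-continuity, then forces $F=F^{+}$, so that one may take the norm-limit $\epsilon\to 0+$ in the displayed inequality to conclude that $F$ itself is real operator concave and that $\Re(F)$ is norm-continuous.

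The only thing to check is that none of the supporting results require finite-dimensionality: Lemma~\ref{L:concave-bounded} is stated for arbitrary normed linear spaces, Proposition~\ref{P:concave-cont} is stated for general Hilbert space $E$, and strong convergence of bounded-below decreasing nets of self-adjoint operators is standard. So there is essentially no obstacle; the corollary amounts to the observation that the $2n$-monotonicity hypothesis of Proposition~\ref{P:contmonotone} is really just monotonicity on $E\oplus E$, which is automatic under the full real operator monotonicity assumption.
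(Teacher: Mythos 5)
Your proposal is correct and follows essentially the same route as the paper, which simply repeats the argument of Proposition~\ref{P:contmonotone} with $\mathbb{C}^n$ replaced by $E$ (the paper phrases the availability of the doubled space via $E\oplus E\simeq E$ when $\dim E=\infty$, while you invoke the gradedness of the free function and monotonicity at the level $E\oplus E$ directly -- an immaterial difference). Your added check that Lemma~\ref{L:concave-bounded}, Proposition~\ref{P:concave-cont} and the strong convergence of bounded monotone nets need no finite-dimensionality is exactly the point the paper leaves implicit.
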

\begin{proof}
The proof goes along the lines of the previous Proposition~\ref{P:contmonotone}, where the role of $\mathbb{C}^{n}$ is taken by $E$ and using the fact that when $\dim(E)=+\infty$ we have that $E\oplus E\simeq E$.
\end{proof}

The reverse implication is also true if $F$ is bounded from below, its proof goes along the lines of Theorem 2.3 in \cite{hansen3}.
So it is worth to isolate the following result.

\begin{theorem}\label{T:concavemonotone}
Let $F:\mathbb{P}_{\mathrm{Re}}(E)^k\mapsto \mathbb{P}_{\mathrm{Re}}(E)$ be a real operator concave ($n$-concave) function. Then $F$ is real operator monotone ($n$-monotone).
\end{theorem}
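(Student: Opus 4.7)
The plan is to adapt Hansen's classical ``operator concave plus positive implies operator monotone'' trick to the real positive setting. The first observation is that $\mathbb{P}_{\mathrm{Re}}(E)$ is an open convex cone, so the segment joining any two comparable points $A\leq_{\mathrm{Re}} B$ in $\mathbb{P}_{\mathrm{Re}}(E)^k$ extends past $B$ while remaining in the domain: for every real $s>1$, the point
\begin{equation*}
X_s:=(1-s)A+sB=B+(s-1)(B-A)
\end{equation*}
has componentwise real part $\Re(X_s)_i=\Re B_i+(s-1)(\Re B_i-\Re A_i)\geq\Re B_i$, so each coordinate $(X_s)_i$ has positive invertible real part and $X_s\in\mathbb{P}_{\mathrm{Re}}(E)^k$.

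The next step is the elementary identity $B=(1-\tfrac{1}{s})A+\tfrac{1}{s}X_s$, which rewrites $B$ as a genuine convex combination of $A$ and $X_s$. Applying the real operator concavity of $F$ to this combination yields
\begin{equation*}
\Re F(B)\geq\left(1-\frac{1}{s}\right)\Re F(A)+\frac{1}{s}\Re F(X_s).
\end{equation*}
Here the codomain hypothesis $F(\mathbb{P}_{\mathrm{Re}}(E)^k)\subseteq\mathbb{P}_{\mathrm{Re}}(E)$ enters essentially: it forces $\Re F(X_s)\geq 0$, so the second term on the right can be discarded. Letting $s\to\infty$ gives $\Re F(B)\geq\Re F(A)$, i.e.\ $F(A)\leq_{\mathrm{Re}} F(B)$, which is the desired monotonicity.

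The real $n$-monotone statement requires no modification, since the entire argument takes place inside $\mathbb{P}_{\mathrm{Re}}(\mathbb{C}^n)^k$ and invokes no dilation to a larger block size; $n$-concavity alone is used. The only place where some thought is needed is the construction of $X_s$: one must extend the segment $[A,B]$ \emph{beyond} $B$ rather than interpolate strictly between the two points, which is precisely what makes the nonnegativity of $\Re F$ do useful work as $s\to\infty$. Once this extension is in hand, there is no further obstacle, and one sees in passing that the same proof goes through under the weaker hypothesis that $\Re F$ is merely bounded from below.
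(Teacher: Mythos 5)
Your proof is correct. The key points all check out: since $A\leq_{\mathrm{Re}}B$ and each $\Re B_i$ is positive and invertible, the extended point $X_s=(1-s)A+sB$ satisfies $\Re(X_s)_i\geq\Re B_i$ and hence lies in $\mathbb{P}_{\mathrm{Re}}(E)^k$; the identity $B=(1-\tfrac{1}{s})A+\tfrac{1}{s}X_s$ is a legitimate convex combination of points of the (convex) domain; real concavity plus $\Re F(X_s)\geq 0$ gives $\Re F(B)\geq(1-\tfrac{1}{s})\Re F(A)$; and the limit $s\to\infty$ needs no continuity of $F$ whatsoever. The paper itself gives no proof of this theorem --- it only remarks that the argument ``goes along the lines of Theorem 2.3 in Hansen's paper,'' which in the scalar functional-calculus setting is a unitary-dilation argument deducing $n$-monotonicity from $2n$-concavity by doubling the block size. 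Your segment-extension argument is the natural replacement in the free-function setting, where concavity is literal concavity on a convex set rather than a property of a functional calculus: it is self-contained, uses only $n$-concavity to get $n$-monotonicity (so it actually matches the theorem as stated, with no dimension doubling), and, as you observe, it survives under the weaker hypothesis that $\Re F$ is bounded from below, since then $\Re F(B)\geq(1-\tfrac{1}{s})\Re F(A)-\tfrac{M}{s}I\to\Re F(A)$.
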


\section{Hypographs and convexity}

In this section we will use the theory of matrix convex sets introduced first by Wittstock. For more on free convexity and matrix convex sets the reader is referred to \cite{effros,helton,helton2,helton3,helton4}. 

Let $\Lat(E)$ denote the \emph{lattice of subspaces} of $E$. The notation $K\leq E$ means that $K$ is a closed subspace of $E$, hence a Hilbert space itself.

\begin{definition}
A graded collection $C=(C(K))$, where each $C(K)\subseteq \mathcal{B}(K)^k$, is \emph{closed with respect to reducing subspaces} if for any tuple of operators $(X_1,\ldots,X_k)\in C(K)$ and any corresponding mutually invariant subspace $N\subseteq K$, we have that $(\hat{X}_1,\ldots,\hat{X}_k)\in C(N)$, where all the $\hat{X}_i$'s are the restrictions of $X_i$ to the invariant subspace $N$ for $i\in \mathbb{N}_k$.
\end{definition}

\begin{lemma}[Lemma 2.3 in \cite{helton4}, \S 2 in \cite{helton2}]\label{lem:matrix_convexity_equivalent}
Suppose that the graded collection $C=(C(K))$, where each $C(K)\subseteq \mathcal{B}(K)^k$ respects direct sums in the sense of 1) in Definition~\ref{def:matrix_covexity} and it respects unitary conjugation in the sense of 2) in Definition~\ref{def:matrix_covexity} with $N=K$.
\begin{itemize}
\item[1)] If $C$ is closed with respect to reducing subspaces, then $C$ is matrix convex if and only if each $C(K)$ is convex in the classical sense of taking scalar convex combinations.
\item[2)] If $C$ is nonempty and matrix convex, then  $0=(0,\ldots,0)\in C(1)$ if and only if $C$ is closed with respect to simultaneous conjugation by contractions.
\end{itemize}
\end{lemma}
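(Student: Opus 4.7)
The plan is to prove both equivalences via isometry and dilation constructions, using the reducing-subspace hypothesis as the bridge in part 1) and a standard isometric dilation in part 2).

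For part 1), the forward implication is direct: given $X, Y \in C(K)^k$ and $\lambda \in [0,1]$, the map $V \colon K \to K \oplus K$ defined by $Vh = (\sqrt{\lambda}\,h,\sqrt{1-\lambda}\,h)$ is a linear isometry with $V^*(X \oplus Y)V = \lambda X + (1-\lambda)Y$, so direct-sum closure followed by isometric compression yields the convex combination. For the reverse, given $X \in C(E)^k$ and an isometry $V \colon K \to E$, my tactic is to convert the compression by $V$ into a restriction to a reducing subspace via symmetrization. Let $P := VV^*$ and introduce the self-adjoint unitary $U := 2P - I_E$. By unitary invariance $U^*XU \in C(E)^k$, and classical convexity yields
\[
\tilde X := \tfrac{1}{2}(X + U^*XU) = PXP + P^\perp X P^\perp \in C(E)^k.
\]
The range of $V$ is now a reducing subspace for every $\tilde X_i$, so the closure-under-reducing-subspaces hypothesis places $\tilde X|_{\ran V} \in C(\ran V)^k$. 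Since $V\colon K \to \ran V$ is a Hilbert-space isomorphism and $V^*\tilde X V = V^*XV$, unitary invariance delivers $V^*XV \in C(K)^k$.

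For part 2), the reverse direction is immediate: picking any $X \in C(E)^k$ from the nonempty collection and conjugating by the zero contraction $0 \colon \mathbb{C} \to E$ gives $0 \in C(1)^k$. For the forward direction, assuming $0 \in C(1)^k$, iterated direct sums together with unitary invariance propagate $0$ to $C(K)^k$ for every finite-dimensional $K$, and the general case is handled by restricting to a sufficiently large reducing subspace. Then, given a contraction $T \colon K \to E$ and $X \in C(E)^k$, form the isometric dilation
\[
\tilde T \colon K \to E \oplus K, \qquad \tilde T h = \bigl(Th,\,(I_K - T^*T)^{1/2} h\bigr),
\]
which satisfies $\tilde T^* \tilde T = I_K$. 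Direct sums place $X \oplus 0 \in C(E \oplus K)^k$, and matrix convexity applied to $\tilde T$ produces $\tilde T^*(X \oplus 0)\tilde T = T^*XT \in C(K)^k$, which is exactly closure under simultaneous conjugation by contractions.

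The main obstacle I anticipate is the reverse implication of part 1): the symmetrization by $U = 2P - I_E$ is precisely the trick that upgrades compression by an arbitrary isometry to restriction onto a reducing subspace, and verifying that both $\ran V$ and its orthogonal complement are invariant under each $\tilde X_i$ is what activates the reducing-subspace hypothesis. The infinite-dimensional propagation of $0 \in C(K)^k$ in part 2) is a bookkeeping concern rather than a conceptual one, but it deserves care since the dilation step demands $0$ on the codomain of the contraction.
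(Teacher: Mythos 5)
The paper does not prove this lemma at all --- it is imported verbatim from Lemma 2.3 of the Helton--Klep--McCullough preprint and \S 2 of Helton--McCullough --- so there is no in-paper argument to compare against; your proposal is essentially a reconstruction of the standard proofs in those references, and it is correct where it matters. In part 1) the forward direction via the isometry $Vh=(\sqrt{\lambda}h,\sqrt{1-\lambda}h)$ is right, and the reverse direction is exactly the classical symmetrization trick: with $P=VV^*$ and $U=2P-I$ one checks $\tfrac12(X+UXU)=PXP+P^{\perp}XP^{\perp}$, which is block diagonal, so $\operatorname{ran}V$ is genuinely reducing and the hypothesis applies; since $P^{\perp}V=0$ the compression of the symmetrized tuple agrees with $V^*XV$. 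Part 2)'s defect-operator dilation $\tilde{T}h=(Th,(I-T^*T)^{1/2}h)$ is likewise the standard argument.

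The one soft spot is your propagation of $0$ into $C(K)$ in part 2). Iterated direct sums and unitary conjugation only reach finite-dimensional $K$, and your proposed fix for the general case --- ``restricting to a sufficiently large reducing subspace'' --- does not work: closure under reducing subspaces is a hypothesis of part 1), not part 2), and even if available it produces elements of $C(N)$ for \emph{smaller} $N\subseteq K$, whereas the dilation step needs $0\in C(K)$ for the (possibly infinite-dimensional) domain $K$ of the contraction. In the matricial setting of the cited sources this issue is vacuous, and in this paper only part 1) is actually invoked (in the proof of the hypograph theorem), so the gap is harmless in context; but as a proof of the lemma over arbitrary Hilbert spaces, the infinite-dimensional case of $0\in C(K)$ is not established by what you wrote and would require either an additional closure assumption (e.g.\ under infinite direct sums or some topological closure) or a restriction of the statement to finite dimensions.
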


Given a set $A\subseteq\mathcal{B}(E)$ we define its \emph{saturation} as $$\sat(A):=\{X\in\mathcal{B}(E) ~:~ \exists Y\in A, Y\geq_{\mathrm{Re}} X\}.$$
Similarly, for a graded collection $C=(C(K))$, where each $C(K)\subseteq\mathcal{B}(K)$, its \emph{saturation} $\sat(C)$ is the disjoint union of $\sat(C(K))$ for each Hilbert space $K$.

\begin{definition}[Hypographs]
Let $F:D(E)\mapsto \mathcal{B}(E)$ be a free function where $(D(E))$ is a free set. Then we define its \emph{real hypograph} $\hypo_{\mathrm{Re}}(F)$ as the graded collection of the saturation of its image, that is
$$\hypo_{\mathrm{Re}}(F)=(\hypo_{\mathrm{Re}}(F)(K)):=(\{(Y,X)\in\mathcal{B}(K)\times D(K):Y\leq_{\mathrm{Re}} F(X)\}).$$
\end{definition}

\begin{theorem}\label{T:convex_hypographs}
Let $F:D(E)\mapsto \mathcal{B}(E)$ be a free function, where $(D(E))$ is a matrix convex set which is closed with respect to reducing subspaces. Then its real hypograph $\hypo_{\mathrm{Re}}(F)$ is a matrix convex set if and only if $F$ is real operator concave.
\end{theorem}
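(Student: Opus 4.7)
The plan is to apply Lemma~\ref{lem:matrix_convexity_equivalent}(1) to reduce matrix convexity of $\hypo_{\mathrm{Re}}(F)$ to classical scalar convexity at each level, after verifying that $\hypo_{\mathrm{Re}}(F)$ automatically inherits the structural axioms (direct sum invariance, unitary invariance, closure under reducing subspaces) from the corresponding properties of $F$ and $D$. With the structural setup in place, both implications of the equivalence reduce to one-line computations.

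For the structural axioms, given $(Y_i, X_i) \in \hypo_{\mathrm{Re}}(F)(K_i)$ with $Y_i \leq_{\mathrm{Re}} F(X_i)$, the direct sum invariance of $F$ gives $Y_1 \oplus Y_2 \leq_{\mathrm{Re}} F(X_1) \oplus F(X_2) = F(X_1 \oplus X_2)$, so $(Y_1 \oplus Y_2, X_1 \oplus X_2) \in \hypo_{\mathrm{Re}}(F)(K_1 \oplus K_2)$; the analogous conjugation identity for $F$ handles unitary invariance. For closure under reducing subspaces, if $N \leq K$ jointly reduces $(Y, X_1, \ldots, X_k)$, write $X = \hat{X} \oplus \check{X}$ and $Y = \hat{Y} \oplus \check{Y}$; since $D$ is closed under reducing subspaces, $\hat{X} \in D(N)$, and direct sum invariance of $F$ forces $F(X) = F(\hat{X}) \oplus F(\check{X})$, so the inequality $Y \leq_{\mathrm{Re}} F(X)$ restricts to $\hat{Y} \leq_{\mathrm{Re}} F(\hat{X})$. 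Hence by Lemma~\ref{lem:matrix_convexity_equivalent}(1) it remains only to prove scalar convexity of each level.

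For the forward direction, assuming $F$ is real operator concave, take $(Y_i, X_i) \in \hypo_{\mathrm{Re}}(F)(K)$ and $\lambda \in [0,1]$; then
\[
\lambda Y_1 + (1-\lambda) Y_2 \leq_{\mathrm{Re}} \lambda F(X_1) + (1-\lambda) F(X_2) \leq_{\mathrm{Re}} F(\lambda X_1 + (1-\lambda) X_2),
\]
where the second inequality is real concavity of $F$; thus the convex combination lies in $\hypo_{\mathrm{Re}}(F)(K)$. For the reverse direction, the pairs $(F(A), A)$ and $(F(B), B)$ tautologically belong to $\hypo_{\mathrm{Re}}(F)(K)$, so scalar convexity of that level yields $\lambda F(A) + (1-\lambda) F(B) \leq_{\mathrm{Re}} F(\lambda A + (1-\lambda) B)$, which is precisely real operator concavity.

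The only technical subtlety is the reducing subspace check: one must observe that the direct sum invariance built into the definition of a free function automatically gives the correct compatibility of $F$ with restrictions to jointly reducing subspaces, which together with the hypothesis that $D$ is closed under reducing subspaces is exactly what lets Lemma~\ref{lem:matrix_convexity_equivalent}(1) apply to $\hypo_{\mathrm{Re}}(F)$. Everything else in the argument is essentially bookkeeping around the defining inequality $Y \leq_{\mathrm{Re}} F(X)$.
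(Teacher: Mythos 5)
Your proposal is correct and follows essentially the same route as the paper: verify that $\hypo_{\mathrm{Re}}(F)$ inherits the direct-sum, unitary-conjugation and reducing-subspace properties from $F$ and $D$, then invoke Lemma~\ref{lem:matrix_convexity_equivalent}(1) in both directions to reduce matrix convexity to levelwise scalar convexity, which is equivalent to real operator concavity via the tautological membership of $(F(A),A)$ in the hypograph. The only cosmetic difference is that you spell out the direct-sum and unitary-invariance checks that the paper leaves implicit.
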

\begin{proof}
Suppose first that $F$ is real operator concave. We will prove the matrix convexity of $\hypo_{\mathrm{Re}}(F)$ by establishing the properties in (1) of Lemma~\ref{lem:matrix_convexity_equivalent}. By the definition of real concavity, and the convexity of $\mathbb{P}_{\mathrm{Re}}$ and the real order intervals, it follows easily that for each Hilbert space $K$, $\hypo_{\mathrm{Re}}(F)(K)$ is convex in the usual sense of taking scalar convex combinations. To see that $\hypo_{\mathrm{Re}}(F)$ is closed with respect to reducing subspaces, assume that $(Y,X)\in\hypo_{\mathrm{Re}}(F)(L)$ such that $(Y,X)=(\hat{Y},\hat{X})\oplus(\overline{Y},\overline{X})$ and $$(\hat{Y},\hat{X})\in\mathcal{B}(K)\times D(K), \quad (\overline{Y},\overline{X})\in\mathcal{B}(N)\times D(N)$$ 
for Hilbert spaces $K\oplus N=L$. Then since $F$ is a free function, it respects direct sums, hence 
$$Y\leq_{\mathrm{Re}} F(X)=F(\hat{X})\oplus F(\overline{X}).$$ Again by the definition of free functions, we have $F(\hat{X})\in\mathcal{B}(K)$ and $F(\overline{X})\in\mathcal{B}(N)$. Since $Y=\hat{Y}\oplus\overline{Y}$, it follows that $\hat{Y}\leq_{\mathrm{Re}} F(\hat{X})$ and $\overline{Y}\leq_{\mathrm{Re}} F(\overline{X})$, or in another words $(\hat{Y},\hat{X})\in\hypo_{\mathrm{Re}}(F)(K)$ and $(\overline{Y},\overline{X})\in\hypo_{\mathrm{Re}}(F)(N)$.

As for the converse, suppose that $\hypo_{\mathrm{Re}}(F)$ is a matrix convex set. First notice that $\hypo_{\mathrm{Re}}(F)$ is closed with respect to reducing subspaces. Indeed, similarly to the above assume that $(Y,X)\in\hypo_{\mathrm{Re}}(F)(L)$ with $(Y,X)=(\hat{Y},\hat{X})\oplus(\overline{Y},\overline{X})$ and $$(\hat{Y},\hat{X})\in\mathcal{B}(K)\times D(K), \quad (\overline{Y},\overline{X})\in\mathcal{B}(N)\times D(N)$$ for Hilbert spaces $K\oplus N=L$. Then since $F$ is a free function, it respects direct sums, hence $$Y\leq_{\mathrm{Re}} F(X)=F(\hat{X})\oplus F(\overline{X}).$$ Again by the definition of free functions, we have $F(\hat{X})\in\mathcal{B}(K)$ and $F(\overline{X})\in\mathcal{B}(N)$. Since $Y=\hat{Y}\oplus\overline{Y}$, it follows that $\hat{Y}\leq_{\mathrm{Re}} F(\hat{X})$ and $\overline{Y}\leq_{\mathrm{Re}} F(\overline{X})$, that is $(\hat{Y},\hat{X})\in\hypo_{\mathrm{Re}}(F)(K)$ and $(\overline{Y},\overline{X})\in\hypo_{\mathrm{Re}}(F)(N)$. So again by part 1) of Lemma~\ref{lem:matrix_convexity_equivalent} it follows that for each Hilbert space $L$, $\hypo_{\mathrm{Re}}(F)(L)$ is convex in the usual sense. It means that for all $t\in[0,1]$ and $A,B\in\mathbb{P}_{\mathrm{Re}}(L)^k$ we have that the tuple $$(Y,X):=(1-t)(F(A),A)+t(F(B),B)$$ lies in $\hypo_{\mathrm{Re}}(F)(L)$, that is $$(1-t)F(A)+tF(B)\leq_{\mathrm{Re}} F(X)=F((1-t)A+tB)$$ meaning that $F$ is real operator concave.
\end{proof}

The above Theorem~\ref{T:convex_hypographs} combined with Theorem~\ref{T:concavemonotone} leads to the following.

\begin{corollary}\label{C:convex_hypographs_monotone}
Let $F:\mathbb{P}_{\mathrm{Re}}(E)^k\mapsto \mathbb{P}_{\mathrm{Re}}(E)$ be a free function. Then its hypograph $\hypo_{\mathrm{Re}}(F)$ is a matrix convex set if and only if $F$ is real operator monotone.
\end{corollary}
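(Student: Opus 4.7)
The plan is to simply chain together the two structural results already available: Theorem~\ref{T:convex_hypographs}, which equates matrix convexity of the hypograph with real operator concavity, and Theorem~\ref{T:concavemonotone} together with Corollary~\ref{P:concavemonotone}, which together give the two-way equivalence between real operator monotonicity and real operator concavity for maps $\mathbb{P}_{\mathrm{Re}}(E)^k\mapsto \mathbb{P}_{\mathrm{Re}}(E)$. So the corollary follows by routing through the intermediate property of real operator concavity.

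Before invoking Theorem~\ref{T:convex_hypographs}, I would briefly verify that the graded collection $(\mathbb{P}_{\mathrm{Re}}(E)^k)$ fits into its hypotheses, i.e.\ that it is a matrix convex free set which is closed with respect to reducing subspaces. Unitary invariance and direct sum invariance are immediate since $\Re(U^{*}XU) = U^{*}\Re(X)U$ and $\Re(X\oplus Y) = \Re(X)\oplus \Re(Y)$, with invertibility of real parts also preserved. Scalar convexity of each slice $\mathbb{P}_{\mathrm{Re}}(E)^k$ follows from linearity of $\Re$ and convexity of the positive cone of invertible positive operators, and combining with unitary invariance plus Lemma~\ref{lem:matrix_convexity_equivalent}(1) gives matrix convexity. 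Closure under reducing subspaces is clear since if $N$ is a reducing subspace for $X$ then $\Re(X)$ also reduces to $N$, and invertibility of $\Re(X)$ on all of $E$ forces invertibility of the restricted real part on $N$.

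For the forward implication, suppose $F$ is real operator monotone. By Corollary~\ref{P:concavemonotone}, $F$ is real operator concave. Theorem~\ref{T:convex_hypographs}, applied to the matrix convex reducing-subspace-closed domain $(\mathbb{P}_{\mathrm{Re}}(E)^k)$, then yields that $\hypo_{\mathrm{Re}}(F)$ is a matrix convex set. Conversely, suppose $\hypo_{\mathrm{Re}}(F)$ is matrix convex. Theorem~\ref{T:convex_hypographs} in the other direction gives that $F$ is real operator concave, and now we crucially use the hypothesis that $F$ takes values in $\mathbb{P}_{\mathrm{Re}}(E)$, which lets us apply Theorem~\ref{T:concavemonotone} to conclude that $F$ is real operator monotone.

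There is essentially no obstacle here beyond bookkeeping; the only subtlety worth flagging is that the converse direction really does need $F$ to land in $\mathbb{P}_{\mathrm{Re}}(E)$ rather than just $\mathcal{B}(E)$, because Theorem~\ref{T:concavemonotone} (concave $\Rightarrow$ monotone) is stated only for such functions. The stronger target space is what closes the loop and converts the one-sided equivalence (concavity $\Leftrightarrow$ convex hypograph) into the desired monotonicity characterization.
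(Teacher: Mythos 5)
Your proposal is correct and follows essentially the same route as the paper, which proves the corollary precisely by combining Theorem~\ref{T:convex_hypographs} with Corollary~\ref{P:concavemonotone} and Theorem~\ref{T:concavemonotone}. Your extra verification that $(\mathbb{P}_{\mathrm{Re}}(E)^k)$ is a matrix convex free set closed under reducing subspaces, and your remark that the codomain $\mathbb{P}_{\mathrm{Re}}(E)$ is what makes the converse direction work, are both accurate and merely make explicit what the paper leaves implicit.
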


\section{Representation and rigidity of real operator monotone functions}

In this section we establish some further characterizations of real operator monotone free functions in terms of operator monotone free functions. This will imply by \cite{palfia2,pascoe2} that the real parts of such functions must be analytic with respect to the real parts of their variables. Further rigidity is derived if we assume free holomorphicity for the function $F$, which according to \cite{verbovetskyi} is equivalent to a mild local boundedness condition on $F$ along with that in Definition~\ref{D:freeFunction} property 1) is strengthened to cover invariance by similarities, that is
\begin{itemize}
	\item[1')] $F(S^{-1}A_1S,\ldots,S^{-1}A_kS)=(I_\mathcal{L}\otimes S^{-1})F(A_1,\ldots,A_k)(I_\mathcal{L}\otimes S)$ for every invertible $S:E\mapsto K$.
\end{itemize}

\begin{theorem}\label{T:DependsOnFirstVar}
Let $(D(E))$ be a free domain where $D(E)\subseteq\mathcal{B}(E)^k$ is defined for all Hilbert spaces $E$. Let $F:D(E)\mapsto\mathcal{B}(E)$ be a free function. Define the free function $\Re F:\Re D(E)\times \Im D(E)\mapsto\mathbb{S}(E)$ by the decomposition $F(X)=\Re F(\Re X,\Im X)+i\Im F(\Re X,\Im X)$. Then $F$ is real operator monotone if and only if $\Re F$ is independent of its second variable $\Im X$ and it is operator monotone in its first variable $\Re X$.
\end{theorem}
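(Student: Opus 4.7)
My plan is to exploit the fact that the real positive preorder $\leq_{\mathrm{Re}}$ on $\mathcal{B}(E)^k$ depends only on the Hermitian parts of its operands: by definition, $A\leq_{\mathrm{Re}} B$ is equivalent to $\Re A\leq \Re B$ componentwise, and consequently the conjunction of $A\leq_{\mathrm{Re}} B$ and $B\leq_{\mathrm{Re}} A$ is equivalent to $\Re A=\Re B$. Both implications of the theorem then follow essentially formally.

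For the necessary direction, assume $F$ is real operator monotone and proceed in two short steps. First, to show that $\Re F$ does not see the skew-Hermitian part, take $X,Y\in D(E)$ with $\Re X=\Re Y$; then by the opening observation both $X\leq_{\mathrm{Re}} Y$ and $Y\leq_{\mathrm{Re}} X$ hold, so applying real operator monotonicity of $F$ in both directions yields $F(X)\leq_{\mathrm{Re}} F(Y)$ and $F(Y)\leq_{\mathrm{Re}} F(X)$, hence $\Re F(X)=\Re F(Y)$. Second, to obtain operator monotonicity in $\Re X$, take $X,Y\in D(E)$ with $\Re X\leq \Re Y$; then $X\leq_{\mathrm{Re}} Y$ and real operator monotonicity of $F$ gives $\Re F(X)\leq \Re F(Y)$. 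Combined, these two steps say precisely that the well-defined map $\widetilde{F}:\Re D(E)\mapsto \mathbb{S}(E)$ given by $\widetilde{F}(\Re X):=\Re F(X)$ is operator monotone, and it is routine to verify from the unitary and direct sum invariance of $F$ together with the identities $\Re(U^*XU)=U^*(\Re X)U$ and $\Re(X\oplus Y)=\Re X\oplus \Re Y$ that $\widetilde{F}$ is itself a free function on the self-adjoint free set $\Re D(E)$.

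For the sufficient direction, assume $\Re F$ is independent of its second argument and operator monotone in its first. Given $A\leq_{\mathrm{Re}} B$ in $D(E)$, the opening observation gives $\Re A\leq \Re B$, and the hypothesis then yields $\Re F(A)\leq \Re F(B)$, which is exactly $F(A)\leq_{\mathrm{Re}} F(B)$. I anticipate no real obstacle: the content of the theorem is not a quantitative estimate but rather the observation that the ostensibly one-sided relation $\leq_{\mathrm{Re}}$ is automatically two-sided along every skew-Hermitian direction, which is what forces $\Re F$ to forget the imaginary part whenever $F$ preserves $\leq_{\mathrm{Re}}$.
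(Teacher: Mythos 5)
Your proposal is correct and follows essentially the same route as the paper: both arguments rest on the observation that $\leq_{\mathrm{Re}}$ sees only the Hermitian parts, so that varying the skew-Hermitian part gives a two-sided comparison forcing $\Re F$ to be independent of $\Im X$, after which monotonicity in $\Re X$ and the converse are immediate. (The paper phrases the first step by comparing $\Re X+i\Im X$ with $\Re X+iW$ for arbitrary $W$, which is the same computation as your $\Re X=\Re Y$ step.)
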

\begin{proof}
Assume first that $F$ is real operator monotone. Let $X\in D(E)$ and let $W\in\Im D(E)\subseteq\mathbb{S}(E)$ be arbitrary. Then $\Re X+iW\leq_{\mathrm{Re}}\Re X+i\Im X\leq_{\mathrm{Re}}\Re X+iW$, so by the real monotonicity
\begin{equation*}
F(\Re X,W)\leq_{\mathrm{Re}}F(\Re X,\Im X)\leq_{\mathrm{Re}}F(\Re X,W)
\end{equation*}
where $W$ is arbitrary for any $X$. Hence we conclude that $\Re F:\Re D(E)\times \Im D(E)\mapsto\mathbb{S}(E)$ is independent of its second variable. By the real operator monotonicity of $F$, it then follows that its real part $\Re F$ is operator monotone in its first variable as a map of self-adjoint operators into self-adjoint operators, it is also not difficult to check that it respects direct sums and simultaneous unitary conjugations, whence a free function itself.

For the converse assume that $\Re F(\Re X,\Im X)=G(\Re X)$ where $G:\Re D(E)\mapsto\mathbb{S}(E)$ is a free operator monotone function. Then clearly $F:D(E)\mapsto\mathcal{B}(E)$ is real operator monotone.
\end{proof}

An immediate consequence is the following representation.

\begin{corollary}\label{C:DependsOnFirstVar}
Let $(D(E))$ be a free domain where $D(E)\subseteq\mathcal{B}(E)^k$ is defined for all Hilbert spaces $E$. Let $F:D(E)\mapsto\mathcal{B}(E)$ be a free function. Then $F$ is real operator monotone if and only if
\begin{equation}\label{eq:C:1}
F(\Re X,\Im X)=G(\Re X)+\mathrm{i}H(\Re X,\Im X)
\end{equation}
where $H:\Re D(E)\times \Im D(E)\mapsto\mathbb{S}(E)$ is a free function and $G:\Re D(E)\mapsto\mathbb{S}(E)$ is an operator monotone free function.
\end{corollary}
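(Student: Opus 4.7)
The plan is to derive this corollary as a direct reformulation of Theorem~\ref{T:DependsOnFirstVar}, where almost all of the work has already been done. The statement and the theorem differ only in that the corollary repackages ``$\Re F$ is independent of $\Im X$ and is operator monotone in $\Re X$'' as ``$F$ splits into a real-monotone free function $G$ of $\Re X$ plus $i$ times an auxiliary free function $H$''.

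For the forward direction, I would start with a real operator monotone free function $F$. Theorem~\ref{T:DependsOnFirstVar} immediately gives a free operator monotone function $G\colon\Re D(E)\mapsto\mathbb{S}(E)$ with $\Re F(\Re X,\Im X)=G(\Re X)$. I would then simply define
\begin{equation*}
H(\Re X,\Im X):=\Im F(\Re X,\Im X),
\end{equation*}
so that \eqref{eq:C:1} holds by the very decomposition $F=\Re F+i\Im F$. The one point to verify is that $H$ is a free function on $\Re D(E)\times\Im D(E)$. This is automatic: since unitary conjugation and direct sum preserve self-adjointness, the maps $X\mapsto \Re F(X)$ and $X\mapsto \Im F(X)$ inherit property~1) and~2) of Definition~\ref{D:freeFunction} from $F$; and they depend on $X$ only through the pair $(\Re X,\Im X)$, so they descend to free functions of that pair.

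For the converse, I would start from the decomposition \eqref{eq:C:1}. Then $\Re F(\Re X,\Im X)=G(\Re X)$ is by construction independent of $\Im X$ and operator monotone in $\Re X$, so the hypothesis of the ``if'' part of Theorem~\ref{T:DependsOnFirstVar} is satisfied and we conclude that $F$ is real operator monotone.

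I do not anticipate a genuine obstacle here; the corollary is essentially a cosmetic rewriting of the theorem and the only thing that might deserve an extra line is the remark that restricting $F$ to its real and imaginary parts produces free functions in their own right, which follows word-for-word from the definition of a free function because the involution $X\mapsto X^*$ commutes with both unitary conjugation and direct sum.
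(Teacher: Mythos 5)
Your proposal is correct and matches the paper's approach: the paper states this corollary as an immediate consequence of Theorem~\ref{T:DependsOnFirstVar} without further argument, and your explicit unpacking (taking $G=\Re F$ from the theorem, $H=\Im F$, and checking freeness of the parts) is exactly the intended reformulation.
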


\begin{remark}
It is clear that for a free function $F$, its imaginary part $\Im F$ does not have any influence on the real operator monotonicity of $F$. 
It can be arbitrary and thus there are real operator monotone functions which are not free holomorphic or analytic. However their real part is always analytic or even holomorphic as a free function of self-adjoint operators, see characterizations of free operator monotonicity in \cite{palfia1}.
\end{remark}

From this point on, we shall assume that $\dim(E)<\infty$ in all statements, in order to avoid delving deeply into topological subtleties. Given a free function $F:D(E)  \mapsto \mathcal{B}(E)$ on a free set $(D(E))$ we say that it is also \emph{free holomorphic} if it satisfies (1') and for each norm continuous linear functional $h:\mathcal{B}(E)\to \mathbb{C}$ the multivariable complex valued function $h(F(X))$ is holomorphic, or equivalently G\^{a}teaux-differentiable, see \cite{verbovetskyi}. Notice that (1') forces the free domain $(D(E))$ to be closed under simultaneous similarity transformations as well, not just simultaneous unitary conjugations. Also we note again that according to the main results in \cite{verbovetskyi}, for a free function $F$, (1') and a mild local boundedness condition on $F$ implies that $F$ is free holomorphic.

\begin{theorem} \label{T:main}
Given a free set $(\X(E))$, let $F:\X(E)  \to \mathcal{B}(E)$ be a free holomorphic function where each $\X(E)\subseteq \mathcal{B}(E)^k$ is open. Then $F$ is real operator monotone if and only if it admits an expression
\[
F(X)=a_0\otimes I+ \sum_{j=1}^k a_j\otimes X_j
\]
where $a_j\in \mathbb{C}$, with $a_j \geq 0$ for $j\in \mathbb{N}_k$.
\end{theorem}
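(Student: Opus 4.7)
The strategy has three ingredients: first reduce to the self-adjoint case via Corollary~\ref{C:DependsOnFirstVar}, then use free holomorphicity as a Cauchy--Riemann condition to force affine linearity of the resulting self-adjoint monotone function, and finally use free function rigidity to identify the coefficients.

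By Corollary~\ref{C:DependsOnFirstVar} I would decompose $F(X)=G(\Re X)+\mathrm{i}H(\Re X,\Im X)$ with $G:\Re\X(E)\to\mathbb{S}(E)$ a free operator monotone function of self-adjoint operators. Free holomorphicity of $F$ makes its Fr\'{e}chet derivative $DF(X)[V]$ a $\mathbb{C}$-linear function of $V$. Differentiating the identity $\Re F(X)=G(\Re X)$ in an arbitrary direction $V$ gives $\Re DF(X)[V]=DG(\Re X)[\Re V]$; substituting $V=\mathrm{i}W$ for self-adjoint $W$ and invoking $\mathbb{C}$-linearity yields $\Im DF(X)[W]=0$, so $DF(X)[W]=DG(\Re X)[W]$ whenever $W$ is self-adjoint. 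Since the right-hand side is independent of $\Im X$, a second differentiation in a skew-Hermitian direction produces $D^{2}F(X)[\,\cdot\,,W]=0$; matching this against $\mathbb{C}$-bilinearity of $D^{2}F(X)$ then forces $D^{2}G\equiv 0$ on the open set $\Re\X(E)$, so $G$ is affine linear.

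Writing $G(Y)=c+\sum_{j=1}^{k}L_{j}(Y_{j})$, unitary invariance gives $c=U^{*}cU$ for every unitary $U$, so $c=c_{0}I$ with $c_{0}\in\mathbb{R}$, and direct sum invariance makes $c_{0}$ independent of $E$. Each $L_{j}:\mathbb{S}(E)\to\mathbb{S}(E)$ is a conjugation-equivariant real linear map, so by a Schur-type decomposition it is a linear combination of the identity and the normalized trace $Y\mapsto\mathrm{tr}(Y)I/\dim E$; direct sum invariance then eliminates the trace component, leaving $L_{j}(Y)=a_{j}Y$ with $a_{j}\in\mathbb{R}$. Real operator monotonicity of $G$ forces $a_{j}\geq 0$. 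Hence $\Re F(X)=c_{0}I+\sum_{j}a_{j}\Re X_{j}$.

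Finally, set $P(X):=F(X)-c_{0}I-\sum_{j}a_{j}X_{j}$, which is free holomorphic with $\Re P\equiv 0$. For every self-adjoint bounded linear functional $\phi$, the scalar holomorphic function $\phi\circ P$ has purely imaginary image and is therefore locally constant by Cauchy--Riemann; since such functionals separate $\mathrm{i}\mathbb{S}(E)$, $P$ is itself locally constant, and unitary plus direct sum invariance upgrade this to $P(X)=\mathrm{i}\beta I$ with one common real scalar $\beta$. Setting $a_{0}:=c_{0}+\mathrm{i}\beta$ gives the desired formula; the converse implication is immediate. The main technical obstacle is the careful separation of $\mathbb{R}$-linear and $\mathbb{C}$-linear structure in operator-valued derivatives in the Cauchy--Riemann step, where the rigidity in fact resides.
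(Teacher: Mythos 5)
Your proof is correct and follows the same skeleton as the paper's: reduce via Corollary~\ref{C:DependsOnFirstVar} to $\Re F(X)=G(\Re X)$ with $G$ operator monotone, use holomorphicity to force the second derivative of $F$ to vanish, classify the resulting linear free function, and read off $a_j\geq 0$ from positivity of the linear part. The two places where you argue differently are both sound and worth noting. First, the paper kills the Hessian via Lemma~\ref{L:pluriharmonic}: the real part of a holomorphic function is pluriharmonic, and since it depends only on $\Re z$ its Levi form coincides with the real Hessian, which therefore vanishes; your computation with the $\mathbb{C}$-linear Fr\'echet derivative (from $\Re DF(X)[\mathrm{i}W]=0$ deduce $DF(X)[W]=DG(\Re X)[W]$, then differentiate once more in a skew-Hermitian direction and use $\mathbb{C}$-bilinearity) is precisely the operator-valued form of the same Cauchy--Riemann rigidity, and is somewhat more self-contained since it does not pass through scalar functionals; note it in fact gives $D^2F\equiv 0$ directly, so $F$ itself is affine and your final ``locally constant $P$'' step could be shortened. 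Second, for the linear part the paper's Lemma~\ref{L:linearmaps} diagonalizes each self-adjoint $H_j$ and uses direct-sum invariance to reduce to scalars, whereas you invoke Schur's lemma for the adjoint representation to see that a unitarily equivariant linear map on $\mathbb{S}(\mathbb{C}^n)$ is a combination of the identity and the normalized trace, with direct-sum invariance killing the trace term; both are valid in the finite-dimensional setting the paper adopts for this theorem. Your handling of the constant replaces the paper's appeal to similarity invariance (which places $F(0)$ in the center); like the paper, you implicitly evaluate at $0$ (or extend the affine map there) and implicitly take $\X(E)$ connected, neither of which is a gap relative to the original argument.
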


We emphasize in advance that the result is still new in the single variable case as well.
Its proof rests heavily on an auxiliary lemma concerning multivariate complex functions.

\begin{definition}[Pluriharmonic function]
Let $\Omega \subseteq \mathbb{C}^m$ be a complex domain for some $m\in \mathbb{N}$.
A function $u:\Omega \mapsto \mathbb{C}$ is called \emph{pluriharmonic} whenever for any complex line
\[
L_{a,b}:=\{ a+bz : z\in \mathbb{C} \}
\]
formed by every couple of complex tuples $a,b\in \mathbb{C}$ the function $z\mapsto f(a+bz)$ is harmonic on the segment $\Omega \cap L_{a,b}$.
\end{definition}
Introducing the Wirtinger derivatives
\[
\frac{\partial}{\partial z}=\frac{1}{2}\left(\frac{\partial}{\partial x}-\mathrm{i}\frac{\partial}{\partial y}\right), \quad
\frac{\partial}{\partial \overline{z}}=\frac{1}{2}\left(\frac{\partial}{\partial x}+\mathrm{i}\frac{\partial}{\partial y}\right)
\]
and their corresponding multivariate counterparts
\[
\partial=
\begin{pmatrix}
\frac{\partial}{\partial z_1}\\
\vdots\\
\frac{\partial}{\partial z_m}\\
\end{pmatrix} ,\quad
\overline{\partial}=
\begin{pmatrix}
\frac{\partial}{\partial \overline{z}_1}\\
\vdots\\
\frac{\partial}{\partial \overline{z}_m}\\
\end{pmatrix}
\]
the pluriharmonic functions can be characterized by the following system of partial differential equations.
\begin{equation} \label{pluriharmonic}
   \partial \overline\partial u =0 \quad \mbox{ throughout } \Omega.
\end{equation}

\begin{definition}[Levi form]
The \emph{Levi form} associated to a $\mathcal{C}^2(\Omega)$ function at the footpoint $z\in \mathbb{C}$ is the Hermitian form
\[
\mathcal{L}(z;c,d)=\sum_{j=1}^m\sum_{k=1}^m \frac{\partial^2u(z)}{\partial z_{j}\partial \overline{z}_k}c_j\overline{d}_k
\]
for any $c,d\in \mathbb{C}^m$.
\end{definition}

\begin{lemma} \label{L:pluriharmonic}
Assume that the holomorphic function $f:  \Omega \mapsto \mathbb{C}^k$ on a complex domain $\Omega\subseteq\mathbb{C}^m$ admits the form $f(z)=u(\Re z)+\mathrm{i}v(\Re z,\Im z)$ and satisfies $f(0)=0$. Then $f$ is linear.
\end{lemma}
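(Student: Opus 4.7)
The plan is to reduce to the scalar-valued case and then exploit pluriharmonicity of $u=\Re f$ together with its independence from $\Im z$. First, since $f=(f_1,\ldots,f_k):\Omega\to\mathbb{C}^k$ has each component holomorphic of the same form $f_\ell(z)=u_\ell(\Re z)+\mathrm{i}v_\ell(\Re z,\Im z)$, it suffices to treat the scalar case, which I now assume. Writing $z_j=x_j+\mathrm{i}y_j$, the function $u$ is real-valued on the open connected projection $\Re\Omega\subseteq\mathbb{R}^m$, while $v$ is real-valued on $\Omega$.

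Next I would invoke that $u=\Re f$ is pluriharmonic, so by \eqref{pluriharmonic} one has $\partial\bar\partial u=0$ on $\Omega$. The key algebraic input is that $u$ depends only on $x$, hence $\partial_{y_j}u\equiv 0$, and the Wirtinger derivatives therefore simplify to
\[
\frac{\partial u}{\partial z_j}=\tfrac12\partial_{x_j}u,\qquad \frac{\partial u}{\partial \bar z_k}=\tfrac12\partial_{x_k}u.
\]
The pluriharmonicity PDE then collapses to the purely real identity
\[
\partial_{x_j}\partial_{x_k}u(x)=0\qquad\text{for all } j,k\in\mathbb{N}_m,
\]
so on the connected set $\Re\Omega$ the function $u$ is affine linear: $u(x)=c\cdot x+u_0$ for some $c\in\mathbb{R}^m$ and $u_0\in\mathbb{R}$.

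Finally I would reconstruct $f$ from its real part. The holomorphic function $g(z):=c\cdot z$ has real part $c\cdot x$, so $f-g$ is holomorphic with constant real part $u_0$; the Cauchy-Riemann equations then force $\Im(f-g)$ to have vanishing gradient, hence $f-g$ is constant on the connected domain $\Omega$. Thus $f(z)=c\cdot z+(u_0+\mathrm{i}v_0)$ for some $v_0\in\mathbb{R}$, and the normalization $f(0)=0$ forces $u_0=v_0=0$, yielding $f(z)=c\cdot z$.

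I do not anticipate a real obstacle: the crux is the single observation that pluriharmonicity of a function independent of the imaginary variables kills its entire real Hessian, and everything else is standard. The only mild subtlety worth flagging is that the slope vector $c$ is automatically real (not just complex), since $u$ is real-valued and affine in the real variable $x$; consequently each coordinate $f_\ell$ is $\mathbb{C}$-linear with real coefficients, giving an honest real-linear structure to $f$ as a map $\mathbb{C}^m\to\mathbb{C}^k$.
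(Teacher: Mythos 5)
Your proposal is correct and follows essentially the same route as the paper: both arguments use that $u=\Re f$ is pluriharmonic and that its independence of $\Im z$ collapses $\partial\bar\partial u=0$ to the vanishing of the full real Hessian of $u$, forcing $u$ to be affine, after which the Cauchy--Riemann equations determine the imaginary part up to a constant killed by $f(0)=0$. Your version is merely a bit more explicit (direct computation of the Wirtinger derivatives and the reconstruction $f=c\cdot z+\text{const}$) where the paper phrases the same step via the Levi form and the observation that $u$ is both convex and concave.
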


\begin{proof}
Assume, as we may, that $k=1$.
The function $u$, being a real part of a holomorphic function, is pluriharmonic (see, for instance \cite{gunning}, page 102.)
Since the function $u$ depends only on its real part, at every $z\in \Omega$ the quadratic form $\mathcal{L}(z;c,c)$ for $c\in \mathbb{R}^m$ reduces to the (real) Hessian of $u$. In virtue of \eqref{pluriharmonic} the Hessian of $u$ vanishes on the whole $\Omega$. This means that $u$ is both convex and concave. As $u(0)=0$, we conclude that the function $u$ is linear.
It follows directly from the Cauchy-Riemann equations that the function $v$ is also linear as well.
\end{proof}

The forthcoming lemma describes the structure of linear free functions.

\begin{lemma} \label{L:linearmaps}
Let $F:D(E)\subseteq\mathcal{B}(E)^k \mapsto \mathcal{B}(E)$ be a linear free function where $(D(E))$ is a free set such that each $D(E)$ contains an open neighborhood of $0$ for each Hilbert space $E$. Then there exist $a_j \in \mathbb{C}$ for $j\in \mathbb{N}_k$ such that
\[
F(X)=\sum_{j=1}^k a_j\otimes X_j.
\]
\end{lemma}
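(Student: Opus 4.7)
The plan is to reduce the multivariate linear free function $F$ to one-variable linear free functions via coordinate-wise decomposition, and then show that every linear free function of a single operator variable is multiplication by a scalar.

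Because $F$ is linear and $(D(E))$ contains a neighbourhood of the origin, I would first extend $F$ uniquely to a linear map $\mathcal{B}(E)^k\to\mathcal{B}(E)$ for each $E$ via the homogeneity formula $F(X):=t^{-1}F(tX)$ for $t>0$ small enough to place $tX$ inside $D(E)$; a straightforward check confirms that the extension still respects unitary conjugation and direct sums, since these properties survive scaling. By linearity,
\begin{equation*}
F(X_1,\ldots,X_k)=\sum_{j=1}^k F_j(X_j), \qquad F_j(X_j):=F(0,\ldots,X_j,\ldots,0),
\end{equation*}
and each $F_j:\mathcal{B}(E)\to\mathcal{B}(E)$ inherits linearity, direct sum invariance, and unitary invariance. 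It therefore suffices to prove that any linear free function $G:\mathcal{B}(E)\to\mathcal{B}(E)$ has the form $G(X)=aX$ for a single scalar $a\in\mathbb{C}$ independent of $E$.

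On $E=\mathbb{C}$, complex linearity alone gives $G_{\mathbb{C}}(z)=az$ for a unique $a\in\mathbb{C}$. Iterating direct sum invariance then yields, for every diagonal matrix $D=\mathrm{diag}(z_1,\ldots,z_n)\in\mathcal{B}(\mathbb{C}^n)$,
\begin{equation*}
G(D)=G(z_1)\oplus\cdots\oplus G(z_n)=\mathrm{diag}(az_1,\ldots,az_n)=aD,
\end{equation*}
which simultaneously confirms that the same scalar $a$ governs every dimension. Unitary invariance propagates this to all normal $N=VDV^*$ through $G(N)=V\,G(D)\,V^*=aN$, and any $X\in\mathcal{B}(\mathbb{C}^n)$ decomposes as $X=\Re X+\mathrm{i}\Im X$ with both parts self-adjoint, hence normal; complex linearity of $G$ then forces $G(X)=aX$. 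Applying this conclusion to each $F_j$ produces scalars $a_j\in\mathbb{C}$ with $F(X_1,\ldots,X_k)=\sum_{j=1}^k a_jX_j$, which matches $\sum_{j=1}^k a_j\otimes X_j$ under the natural identification $a_j\otimes X_j=a_jX_j$.

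The only delicate point is guaranteeing that the scalar $a$ is rigid as $E$ varies, and this rigidity is already built into the diagonal step by reducing the $n$-dimensional case to the single complex number determined on $\mathcal{B}(\mathbb{C})$; beyond that the argument is essentially bookkeeping, so no genuine obstacle is expected.
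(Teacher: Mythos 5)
Your proposal is correct and follows essentially the same route as the paper: use linearity and the neighbourhood of $0$ to rescale into the domain and reduce to the coordinate functions $F(e_j\otimes\,\cdot\,)$, pin down a scalar on $\mathcal{B}(\mathbb{C})$, propagate it to diagonal matrices by direct-sum invariance and to all (normal, hence all) matrices by unitary invariance and the decomposition $X=\Re X+\mathrm{i}\Im X$. The paper merely orders these same steps slightly differently (reducing to self-adjoint tuples first and then diagonalizing each $H_j=U_j(\oplus_m d_{jm})U_j^*$), so no further comment is needed.
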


\begin{proof}
Let $H\in D(E)$ so that $H=\Re H+i\Im H$, where $\Re H=(H+H^*)/2$ and $\Im H=(H-H^*)/(2i)$. Since $D(E)$ contains an open neighborhood of $0$ there exists an $r>0$ such that the open ball $B(0,r)\subseteq D(E)$. Thus, there exists an $\epsilon>0$ such that $\epsilon H\in B(0,r)$ and also $\epsilon\Re H, \epsilon\Im H\in B(0,r)$. By linearity of $F$ we conclude that
\[
\epsilon F(H)=F(\epsilon H)=F(\epsilon\Re H+i\epsilon\Im H)=F(\epsilon\Re H)+iF(\epsilon\Im H),
\]
so it is sufficient to determine $F(H)$ for all self-adjoint $H\in B(0,r)$. To this end, consider a self-adjoint operator $H=(H_1,\ldots, H_k)\in B(0,r)$ and observe that $e_j\otimes H_j\in B(0,r)$.
As each the $H_j$'s are unitary similar to some diagonal matrices, there exist unitaries $U_j\in \mathcal{B}(E)$ such that $H_j=U_j(\oplus_{m=1}^n d_{jm})U_j^*$ with some real numbers $d_{jm}$ for $m\in \mathbb{N}_n$ and $j\in \mathbb{N}_k$.
Denote $a_j:=F(e_j\otimes 1)$.
By linearity of $F$ and elementary properties of free functions, we deduce
\[
\begin{gathered}
F(H)=\sum_{j=1}^{k} U_j F(e_j \otimes (\oplus_{m=1}^n d_{jm})) U_j^* = \\
\sum_{j=1}^{k} U_j \left(\oplus_{m=1}^n F( e_j \otimes d_{jm}) \right) U_j^*=
\sum_{j=1}^{k} U_j \left(\oplus_{m=1}^n d_{jm}F( e_j \otimes 1) \right) U_j^* \\
=\sum_{j=1}^{k}F( e_j \otimes 1) U_j \left(\oplus_{m=1}^n d_{jm}\right) U_j^*=
\sum_{j=1}^{k}a_j\otimes H_j
\end{gathered}
\]
which completes the proof of the lemma.
\end{proof}

After all these preparations, we are in a position to prove the main result of the section.

\begin{proof}[Proof of Theorem~\ref{T:main}]
The 'if' part is apparent, so we are concerned with verifying the exciting 'only if' part.

Since $F$ is holomorphic, according to \cite{verbovetskyi} $F$ commutes with similarities. This yields that $F(0)$ is associated to the center of $\mathcal{B}(E)$. Therefore, there exists some $a_0\in \mathbb{C}$ such that $F(0)=a_0\otimes I$. Moreover, an application of Lemma~\ref{L:pluriharmonic} furnishes that the free function
\[
X\mapsto F(X)-a_0\otimes I
\]
is linear, whence the result follows directly from Lemma~\ref{L:linearmaps} and the fact that the above linear map is positive, for the latter see Lemma 2.3. in \cite{bearden}.
\end{proof}

\begin{remark}
So far we have studied functions in Definition~\ref{D:freeFunction} with $\mathcal{L}=\mathbb{C}$. Our results generalize to the setting when $\mathcal{L}$ is an arbitrary Hilbert space, since given a free function $F:D(E)\mapsto \mathcal{B}(\mathcal{L}\otimes E)$ for a domain $D(E)\subseteq\mathcal{B}(E)^k$, we can reduce to the case when $\mathcal{L}=\mathbb{C}$ by looking at the free function $F_h:D(E)\mapsto \mathcal{B}(E)$ defined as $F_h(X)=(h\otimes I)(F(X))$ where $h$ is a state on $\mathcal{B}(\mathcal{L})$. In this way the constants $a_j$ in Lemma~\ref{L:linearmaps} will become bounded linear operators in $\mathcal{B}(\mathcal{L})$ and furthermore $a_j\geq 0$ for $j\in \mathbb{N}_k$ accordingly in Theorem~\ref{T:main}.
\end{remark}

\begin{remark}
Another natural generalization is to consider more general domains $D(E)\subseteq\mathcal{B}(E)\otimes\mathcal{Z}$ for an operator space $\mathcal{Z}$ as in \cite{verbovetskyi}. Then for a given $X\in D(E)$, simultaneous unitary conjugation with $U\in\mathcal{B}(E)$ is to be understood as
\begin{equation*}
U^*XU:=(U^*\otimes I_\mathcal{Z})X(U\otimes I_\mathcal{Z})
\end{equation*}
and we get back to Definition~\ref{D:freeFunction} by choosing $\mathcal{Z}=\mathbb{C}^k$. Then Theorem~\ref{T:DependsOnFirstVar} and Corollary~\ref{C:DependsOnFirstVar} are still true and we can use Lemma~\ref{L:pluriharmonic} as well, since we may restrict to finite dimensional subspaces of the domain due to general properties of free holomorphic functions considered in \cite{verbovetskyi}. Then the statement of Theorem~\ref{T:main} reads that $F$ is real operator monotone if and only if it is affine linear with its nonconstant part being a (real) completely positive linear map. Here complete positivity is derived essentially from Proposition~\ref{P:FrechetRealPos}, since the Fr\'echet-derivative $DF(X)(Y)$ of an affine linear free function $F$ is a linear map of $Y$ that is independent of $X$. Thus the expression in Theorem~\ref{T:main} becomes
\begin{equation*}
F(X)=C\otimes I+\phi(X)
\end{equation*}
where $C\in\mathcal{B}(\mathcal{L})$ and $\phi:\mathcal{Z}\mapsto\mathcal{B}(\mathcal{L})$ is a completely positive linear map. This provides an alternative proof of the expression in Theorem~\ref{T:main} as well when $\mathcal{Z}=\mathbb{C}^k$ due to the structure theory of completely positive linear maps \cite{paulsen}.
\end{remark}

\subsection*{Acknowledgments}

The current research was partially supported by the National Research, Development and Innovation Office -- NKFIH Reg. No.'s K-115383 and K-128972, and by the Ministry of Human Capacities, Hungary through grant 20391-3/2018/FEKUSTRAT.

The work of Ga\'al was supported by the DAAD-Tempus PPP Grant 57448965.

The work of P\'alfia was supported by the National Research Foundation of Korea (NRF) grants founded by the Korea government (MEST) No.2015R1A3A2031159, No.2016R1C1B1011972 and No.2019R1C1C1006405.

\end{document}